\documentclass[a4paper,reqno,11pt]{amsart}
\usepackage{amsfonts,amsmath,amsthm,amssymb,stmaryrd}
\usepackage{natbib}

\allowdisplaybreaks[4]%跨页

\newtheorem{Theorem}{Theorem}[section]

\newtheorem{Lemma}[Theorem]{Lemma}
\newtheorem{Proposition}{Proposition}[section]
\newtheorem{Remark}{Remark}[section]

\newcommand{\pt}{\partial_{t}}

\newcommand{\pz}{\partial_{z}}

\newcommand{\q}{\quad}
\newcommand{\e}{\varepsilon}

\numberwithin{equation}{section} \allowdisplaybreaks
\usepackage[left=1 in, right=1 in,top=1 in, bottom=1 in]{geometry}
\begin{document}
\bibliographystyle{plain}
\title[Linear ill-posedness of three-dimensional MHD boundary layer equations]{\bf  Linear ill-posedness of three-dimensional MHD boundary layer equations}
\author{Mingxue Zhang$^{1}$}
\author{Zhonger Wu$^{2,*}$}
\thanks{$^{1}$ Institute for Math and AI, Wuhan University, Wuhan, 430072, China }
\thanks{$^{2}$ Department of Mathematics, Shantou University, Shantou 515063, China }
\thanks{$^{*}$Corresponding author: Zhonger Wu, wze622520@163.com}
\thanks{Mingxue Zhang, zhangmingxuex@163.com}

\begin{abstract}
%We study the linear ill-posedness of a three-dimensional resistive MHD boundary layer system in tangential Sobolev spaces. The equations are linearized around a time-dependent shear profile $(\mathbf U_s(t,z),\mathbf B_s(t,z))$, where both the velocity and the magnetic shear satisfy one-dimensional heat equations. We assume that a suitable linear combination of the two tangential velocity components has a non-degenerate critical point and that both tangential components of the initial magnetic shear vanish to first order at the same point. By combining the high-frequency critical-layer construction for the three-dimensional Prandtl equations with a vector-valued magnetic quotient adapted from the two-dimensional MHD boundary layer problem, we construct rapidly growing approximate solutions and establish the required residual estimates. This construction yields linear ill-posedness for every loss of fewer than $\frac18$ of a tangential derivative. The result shows how the three-dimensional Prandtl instability may reappear when the stabilizing tangential magnetic field degenerates.
We prove linear ill-posedness in tangential Sobolev spaces for the three-dimensional resistive MHD boundary layer equations. We assume that a suitable linear combination of the initial tangential velocity components has a non-degenerate critical point and that both initial tangential magnetic components have a double zero at the same point. Compared with the corresponding two-dimensional MHD result, our construction requires only this double-zero condition, rather than a higher-order degeneracy of the magnetic shear. The proof combines the three-dimensional Prandtl critical layer construction with a vector correction to the two-dimensional magnetic quotient. This correction cancels the leading terms in both induction equations while preserving the magnetic divergence constraint. The resulting approximate solutions grow like $\exp(\sigma t/\sqrt{\varepsilon})$, and their residuals have a prefactor of order $O(\varepsilon^{-3/8})$, up to logarithmic factors. A Duhamel argument then yields linear ill-posedness for every tangential derivative loss $\mu<\frac18$. Thus the three-dimensional Prandtl instability persists when the stabilizing tangential magnetic field degenerates.

 \bigbreak
\noindent
{\bf \normalsize Keywords:}  {MHD boundary layer; linear ill-posedness; shear flows; critical layer; Sobolev spaces}\bigbreak

\end{abstract}
\subjclass[2010]{ 35Q35; 76W05; 76D10.}
\maketitle
\section{Introduction.}
In this paper, we consider the three-dimensional MHD boundary layer equations on $\Omega=\{(t,x,y,z): t>0,(x,y)\in \mathbb{T}^2,z\in\mathbb{R}^{+}\}$, namely
\begin{equation}\label{mhd}
\left\{
\begin{aligned}
 &(\partial_t+\mathbf u\cdot\nabla_{\!h}
   +w\partial_z-\partial_z^2)\mathbf u
   -(\mathbf b\cdot\nabla_{\!h}+h\partial_z)\mathbf b
   +\nabla_{\!h}p=0,\\
 &(\partial_t+\mathbf u\cdot\nabla_{\!h}
   +w\partial_z-\partial_z^2)\mathbf b
   -(\mathbf b\cdot\nabla_{\!h}+h\partial_z)\mathbf u=0,\\
 &\nabla_{\!h}\cdot\mathbf u+\partial_z w=0,
 \qquad
 \nabla_{\!h}\cdot\mathbf b+\partial_z h=0,\\
 &(\mathbf u,w)|_{z=0}=\mathbf0,
 \qquad
 (\partial_z\mathbf b,h)|_{z=0}=\mathbf0,\\
 &\lim\limits_{z\rightarrow+\infty}(\mathbf u,\mathbf b)=
   (\mathbf U^E,\mathbf B^E),
\end{aligned}
\right.
\end{equation}
where the tangential velocity and magnetic field are denoted by
$
 \mathbf u=(u,v),~\mathbf b=(b,g),
$
respectively, and $\nabla_{\!h}=(\partial_x,\partial_y)$ is the tangential gradient.
The scalar functions $w$ and $h$ denote the normal velocity and magnetic field components, respectively.
The prescribed outer-flow vectors
$
 \mathbf U^E=(U^E,V^E),~\mathbf B^E=(B^E,G^E),
$
together with the pressure $p$, satisfy the following Bernoulli's law
\begin{equation}\label{bnl}
\left\{
\begin{aligned}
 &\partial_t\mathbf U^E
  +(\mathbf U^E\cdot\nabla_{\!h})\mathbf U^E
  -(\mathbf B^E\cdot\nabla_{\!h})\mathbf B^E
  +\nabla_{\!h}p=0,\\
 &\partial_t\mathbf B^E
  +(\mathbf U^E\cdot\nabla_{\!h})\mathbf B^E
  -(\mathbf B^E\cdot\nabla_{\!h})\mathbf U^E=0.
\end{aligned}
\right.
\end{equation}

%{\color{red} In three dimensions, variation of the tangential-flow direction with the normal variable produces linear instability through a reduction to the two-dimensional critical-layer mechanism \cite{LWY}. For flows of the form \((u,ku,w)\), however, the fixed tangential direction and monotonicity ensure local well-posedness and linear stability \cite{CWYBSD}. Thus, secondary flow is the key three-dimensional instability mechanism. Without monotonicity, well-posedness is available in analytic or Gevrey spaces \cite{HD1,LMY,LYJEMS,PZ,WWZ}.}
In the absence of a magnetic field, the MHD boundary layer equations reduce to the classical Prandtl equations.
In 1963, Oleinik initiated the rigorous mathematical study of the Prandtl equations.
%Under the monotonicity condition $(\py u>0)$, Oleinik used the Crocco transformation to prove the local well-posedness of classical solutions for the two-dimensional Prandtl equations; see \cite{Oleinik1,Oleinik}.
%This theory was later reformulated in weighted Sobolev spaces by Alexandre, Wang, Xu and Yang \cite{AWXT}, using energy estimates together with a Nash--Moser iteration, and by Masmoudi and Wong \cite{MW}, using a direct nonlinear energy method. These works show that monotonicity provides a cancellation mechanism that compensates for the loss of tangential derivatives.
Under the monotonicity condition $\partial_z u>0$, Oleinik used the Crocco transformation to establish local well-posedness of classical solutions to the two-dimensional Prandtl equations; see \cite{Oleinik1,Oleinik}. Later,
Alexandre, Wang, Xu and Yang \cite{AWXT} and Masmoudi and Wong \cite{MW} obtained corresponding results directly in weighted Sobolev spaces, without using the Crocco transformation. The former combined energy estimates with a Nash-Moser iteration, whereas the latter developed a direct nonlinear energy method. In both approaches, monotonicity yields the cancellation needed to control the loss of tangential derivatives.
%This result was subsequently reproved in weighted Sobolev spaces by energy methods \cite{AWXT,MW}.
The monotonicity assumption is closely related to stability. In contrast, a non-degenerate critical point of the velocity shear produces strong linear ill-posedness in Sobolev spaces \cite{JAMS}.
%Under the same monotonicity assumption,  \cite{AWXT} and \cite{MW} independently used energy methods to prove the local well-posedness of the Prandtl equations in weighted Sobolev spaces.
%Furthermore, by imposing an additional favorable pressure condition $(\px p\leq 0)$, Xin and his collaborators \cite{XZ,XZ2024} employed the Crocco transformation to establish the global existence and uniqueness of weak solutions.
%All these results require the monotonicity assumption on the velocity field, indicating that this assumption is crucial for the well-posedness of the Prandtl equations in the framework of Sobolev spaces.
%In \cite{JAMS}, it is shown that the two-dimensional Prandtl system is linearly ill-posed in Sobolev spaces when the background shear flow has a non-degenerate critical point.
%Later, Liu and Yang \cite{LYBSD} extended this result to cases where $u_s$ converges to $\underline{u}$ without exponential convergence.
For the three-dimensional Prandtl equations, the presence of secondary flow leads to linear instability. Liu, Wang and Yang \cite{LWY} showed that this instability arises from the variation of the tangential flow direction with the normal variable. Under certain structural assumptions on the solutions, local well-posedness and linear stability were established in \cite{CWYBSD}.
%{\color{blue}Liu, Wang and Yang \cite{LWY} then identified a genuinely three-dimensional instability mechanism: a shear flow is linearly unstable when the direction of its tangential velocity varies with the normal variable. In contrast, under the special structure \((u,ku,w)\), where the tangential direction is independent of the normal variable and the scalar component is monotone, Liu, Wang and Yang \cite{CWYBSD} proved local well-posedness and linear stability with respect to general smooth three-dimensional perturbations. Thus, \cite{LWY,CWYBSD} together isolate variation of the tangential-flow direction, or equivalently the presence of secondary flow, as the decisive three-dimensional mechanism.}
%Building on the linear instability results from \cite{JAMS}, Guo and Nguyen \cite{GYCPAM} established nonlinear instability of the Prandtl equations near non-monotonic shear flows in a weak sense. Subsequently, \cite{remarksaa} systematically examined the ill-posedness of both linear and nonlinear Prandtl equations in Sobolev spaces.
Without monotonicity, well-posedness is available in analytic or Gevrey spaces \cite{HD1,LMY,LYJEMS,PZ,WWZ}.

Compared with the Prandtl equations, the MHD boundary layer equations are more complex due to the additional loss of derivatives in the magnetic field.
Under the assumption that the initial tangential magnetic field is non-degenerate , \cite{LX,CJLJFA,CJLCPAM} established the local well-posedness of the two-dimensional MHD boundary layer equations in Sobolev spaces.
A recent work by Wu \cite{WZE} established the well-posedness of the three-dimensional nonlinear MHD boundary layer equations in Sobolev space.
These results indicate that the magnetic field exerts a stabilizing effect on the boundary layer equations.
Well-posedness and long-time existence have also been investigated in analytic, partially analytic, and Gevrey spaces; see \cite{CK,SXL,LY1,lxymmas,NL,TW,FX} and the references therein.
On the other hand, \cite{lxyscm} showed that sufficient degeneracy of the tangential magnetic field at a non-degenerate critical point of the tangential velocity shear leads to linear instability of the two-dimensional MHD boundary layer equations. This result indicates that the stabilizing effect of the tangential magnetic field may break down when the field degenerates. Whether an analogous instability occurs in three dimensions remains open.
%By contrast, \cite{lxyscm} proved linear instability for the two-dimensional MHD boundary layer when the tangential magnetic field degenerates sufficiently at a non-degenerate critical point of the velocity shear. This raises the question whether the same mechanism persists in three dimensions.

%The purpose of this paper is to answer this question for the three-dimensional MHD boundary layer system. We show that the linearized equations are ill-posed in tangential Sobolev spaces when a suitable linear combination of the tangential velocity components has a non-degenerate critical point and both tangential magnetic components vanish to first order at the same point. The proof combines the three-dimensional Prandtl critical-layer construction of \cite{LWY} with the magnetic cancellation mechanism developed in \cite{CJLJFA}.

The present paper establishes linear instability for the three-dimensional resistive MHD boundary layer system. Our argument combines the Prandtl critical-layer construction of \cite{LWY} with the magnetic quotient mechanism of \cite{CJLJFA}. Compared with the two-dimensional resistive MHD result of \cite{lxyscm}, our instability criterion substantially weakens the magnetic degeneracy assumption: instead of requiring degeneracy up to
sixth order, it only requires both tangential magnetic components to have a double zero at the velocity critical point. The main new ingredient is a vector correction that cancels the leading terms in both induction equations while preserving the magnetic divergence constraint.

For simplicity, we assume that the outer-flow vectors $\mathbf U^E$ and $\mathbf B^E$ are constant. It follows from \eqref{bnl} that $\nabla_{\!h}p=0$. Thus, system \eqref{mhd} becomes
\begin{equation}\label{mhd1}
\left\{
\begin{aligned}
 &(\partial_t+\mathbf u\cdot\nabla_{\!h}
   +w\partial_z-\partial_z^2)\mathbf u
   -(\mathbf b\cdot\nabla_{\!h}+h\partial_z)\mathbf b=0,\\
 &(\partial_t+\mathbf u\cdot\nabla_{\!h}
   +w\partial_z-\partial_z^2)\mathbf b
   -(\mathbf b\cdot\nabla_{\!h}+h\partial_z)\mathbf u=0,\\
 &\nabla_{\!h}\cdot\mathbf u+\partial_z w=0,
 \qquad
 \nabla_{\!h}\cdot\mathbf b+\partial_z h=0,\\
 &(\mathbf u,w)|_{z=0}=\mathbf0,
 \qquad
 (\partial_z\mathbf b,h)|_{z=0}=\mathbf0,\\
& \lim\limits_{z\rightarrow+\infty}(\mathbf u,\mathbf b)=
   (\mathbf U^E,\mathbf B^E).
\end{aligned}
\right.
\end{equation}
Let $(\mathbf U^s(t,z),0,\mathbf B^s(t,z),0)$ be a shear-flow solution of \eqref{mhd1}, where $\mathbf U^s:=(u^s,v^s)$ and $\mathbf B^s:=(b^s,g^s)$ satisfy the following initial-boundary value problems for the heat equations
\begin{equation}\label{us}
\left\{
\begin{aligned}
 &(\partial_t-\partial_z^2)\mathbf U^s=0,\\
 &\mathbf U^s|_{z=0}=\mathbf0,\quad
 \lim_{z\to+\infty}\mathbf U^s=\mathbf U^E,\\
 &\mathbf U^s|_{t=0}=\mathbf U_s,
\end{aligned}
\right.
\qquad
\left\{
\begin{aligned}
 &(\partial_t-\partial_z^2)\mathbf B^s=0,\\
 &\partial_z\mathbf B^s|_{z=0}=\mathbf0,\quad
 \lim_{z\to+\infty}\mathbf B^s=\mathbf B^E,\\
 &\mathbf B^s|_{t=0}=\mathbf B_s.
\end{aligned}
\right.
\end{equation}
Here $\mathbf U_s=(U_s,V_s)$ and $\mathbf B_s=(B_s,G_s)$. As $z\rightarrow+\infty$, the quantities $(\mathbf U^s-\mathbf U^E,\mathbf B^s-\mathbf B^E)$ decay rapidly to zero.

Linearizing \eqref{mhd1} around this shear flow gives
%We now linearize system \eqref{mhd1} around the shear flow $(\mathbf U^s(t,z),0,\mathbf B^s(t,z),0)$. The resulting system reads
\begin{equation}\label{mhd2}
\left\{
\begin{aligned}
 &(\partial_t+\mathbf U^s\cdot\nabla_{\!h}
   -\partial_z^2)\mathbf u
   -(\mathbf B^s\cdot\nabla_{\!h})\mathbf b
   +w\partial_z\mathbf U^s-h\partial_z\mathbf B^s=0,\\
 &(\partial_t+\mathbf U^s\cdot\nabla_{\!h}
   -\partial_z^2)\mathbf b
   -(\mathbf B^s\cdot\nabla_{\!h})\mathbf u
   +w\partial_z\mathbf B^s-h\partial_z\mathbf U^s=0,\\
 &\nabla_{\!h}\cdot\mathbf u+\partial_z w=0,
 \qquad
 \nabla_{\!h}\cdot\mathbf b+\partial_z h=0,\\
 &(\mathbf u,w)|_{z=0}=\mathbf0,
 \qquad
 (\partial_z\mathbf b,h)|_{z=0}=\mathbf0,\\
& \lim_{z\to+\infty}(\mathbf u,\mathbf b)=(\mathbf0,\mathbf0),
\end{aligned}
\right.
\end{equation}
where $\mathbf u=(u,v)$ and $\mathbf b=(b,g)$ denote
the tangential velocity and magnetic perturbations,
respectively, while $w$ and $h$ denote their normal components.

First, we introduce some function spaces. For any
$\alpha,s\geq0,\beta>0$, let
\begin{align*}
\begin{split}
&L^2_\alpha(\mathbb{R^+})\triangleq\Big\{f=f(z),e^{\alpha z}f\in L^2(\mathbb{R^+})\Big\},\\
&H^s_\alpha(\mathbb{R^+})\triangleq\Big\{f=f(z),e^{\alpha z}f\in H^s(\mathbb{R^+})\Big\},\\
&W_\alpha^{s,\infty}(\mathbb{R^+})\triangleq\Big\{f=f(z),e^{\alpha z}f\in W^{s,\infty}(\mathbb{R^+})\Big\},\\
&E_{\alpha,\beta}\triangleq\Big\{f=\sum\limits_{k_1,k_2\in \mathbb{Z}}e^{i(k_1x+k_2y)}f_{k_1,k_2}(z),
\|f_{k_1,k_2}(z)\|_{L^2_\alpha}\leq C_{\alpha,\beta}e^{-\beta\sqrt{k_1^2+k_2^2}}\Big\},\\
\end{split}
\end{align*}
and
$$
\|f\|_{H_\alpha^{s}}\triangleq\|e^{\alpha z}f\|_{H^{s}},
\|f\|_{W_\alpha^{s,\infty}}\triangleq\|e^{\alpha z}f\|_{W^{s,\infty}},
\|f\|_{E_{\alpha,\beta}}\triangleq\sup\limits_{k_1,k_2\in \mathbb{Z}}
e^{\beta\sqrt{k_1^2+k_2^2}}\|f_{k_1,k_2}\|_{L^2_\alpha}.$$\\
Note that functions in $E_{\alpha,\beta}$ possess analytic regularity in the $x,y$-variables and
$L^2$ regularity in the $z$-variable.
The following proposition establishes the well-posedness for the linearized equations \eqref{mhd2} in the analytic function spaces $E_{\alpha,\beta}$.
\begin{Proposition}\label{jiexi}
Assume that $(\mathbf U^s-\mathbf U^E,\mathbf B^s-\mathbf B^E)\in C^0(\mathbb{R^+};W_\alpha^{1,\infty}(\mathbb{R^+})\cap H_\alpha^{1}(\mathbb{R^+})),$
then there exists a constant
$\delta>0$ such that, for any $T>0$  satisfying
$\beta-\delta T>0$, and initial data $(\mathbf u _0,\mathbf b _0)\in E_{\alpha,\beta}$, the linear problem \eqref{mhd2} with initial condition $(\mathbf u,\mathbf b)\big|_{t=0}=(\mathbf u _0,\mathbf b _0)$ admits a unique solution $(\mathbf u,\mathbf b)$ satisfying
$$ (\mathbf u,\mathbf b)\in C^0\big([0,T);E_{\alpha,\beta-\delta T}\big),\q (\mathbf u,\mathbf b)(t,\cdot)\in  E_{\alpha,\beta-\delta t}. $$
\end{Proposition}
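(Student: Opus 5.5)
Since the shear profile depends only on $(t,z)$, the linear system \eqref{mhd2} decouples completely in the tangential Fourier modes. For each $k=(k_1,k_2)\in\mathbb Z^2$ I would write $(\mathbf u,\mathbf b)=\sum_k e^{i(k_1x+k_2y)}(\mathbf u_k,\mathbf b_k)(t,z)$ and recover the normal components from the divergence constraints and the boundary conditions:
$$w_k=-i\int_0^z k\cdot\mathbf u_k\,dz',\qquad h_k=-i\int_0^z k\cdot\mathbf b_k\,dz'.$$
Each mode then satisfies a parabolic system
$$\partial_t\mathbf u_k-\partial_z^2\mathbf u_k=-i(k\cdot\mathbf U^s)\mathbf u_k+i(k\cdot\mathbf B^s)\mathbf b_k-w_k\partial_z\mathbf U^s+h_k\partial_z\mathbf B^s,$$
with the analogous equation for $\mathbf b_k$. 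It carries the Dirichlet condition for $\mathbf u_k$ and the Neumann condition for $\mathbf b_k$. The right-hand side has size $O(|k|)$ times lower-order (zeroth-order in $z$) operators.

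The plan is a per-mode weighted $L^2$ energy estimate that loses exactly a factor $|k|$, followed by the standard analytic-norm bookkeeping.

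\textbf{Step 1: energy identity.} Multiply the $\mathbf u_k$ equation by $e^{2\alpha z}\bar{\mathbf u}_k$ and the $\mathbf b_k$ equation by $e^{2\alpha z}\bar{\mathbf b}_k$, integrate in $z$, and take real parts.

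\textbf{Step 2: handle each term.}
\begin{itemize}
\item The transport terms $-i(k\cdot\mathbf U^s)\mathbf u_k$ are purely imaginary multiplications, so they vanish in the real part.
\item The diffusion term gives $-\|e^{\alpha z}\partial_z\mathbf u_k\|^2$ plus a cross term from the weight, bounded by $2\alpha\|e^{\alpha z}\mathbf u_k\|\|e^{\alpha z}\partial_z\mathbf u_k\|$. This is absorbed, leaving a term $C\alpha^2\|\cdot\|^2_{L^2_\alpha}$. The boundary terms vanish by the Dirichlet/Neumann conditions.
\item The coupling terms $(k\cdot\mathbf B^s)\mathbf b_k$ are bounded by $|k|\,\|\mathbf B^s\|_{L^\infty}$ times the $L^2_\alpha$ norms.
\end{itemize}

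\textbf{Step 3: the main obstacle.} The terms $w_k\partial_z\mathbf U^s$ and $h_k\partial_z\mathbf B^s$ are delicate because $w_k$ is not decaying: it is only bounded, by Cauchy--Schwarz,
$$|w_k(z)|\le |k|\int_0^\infty|\mathbf u_k|\le |k|(2\alpha)^{-1/2}\|\mathbf u_k\|_{L^2_\alpha}.$$
This is exactly where the hypothesis $\partial_z\mathbf U^s,\partial_z\mathbf B^s\in L^\infty$ with $e^{\alpha z}$ weight (from the $W^{1,\infty}_\alpha$ assumption) is needed. With it,
$$\|e^{\alpha z}w_k\partial_z\mathbf U^s\|_{L^2}\le \|w_k\|_{L^\infty}\|e^{\alpha z}\partial_z\mathbf U^s\|_{L^2}\lesssim |k|\,\|\mathbf u_k\|_{L^2_\alpha},$$
using the $H^1_\alpha$ assumption. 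The same bound holds for the $h_k$ terms.

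\textbf{Step 4: per-mode growth bound.} Summing up gives
$$\frac{d}{dt}\big(\|\mathbf u_k\|^2_{L^2_\alpha}+\|\mathbf b_k\|^2_{L^2_\alpha}\big)\le 2\big(\delta_0|k|+C\big)\big(\|\mathbf u_k\|^2_{L^2_\alpha}+\|\mathbf b_k\|^2_{L^2_\alpha}\big),$$
with $\delta_0$ depending on $\sup_t$ of the assumed norms. For $k\neq 0$ the constant $C$ can be absorbed by taking $\delta=\delta_0+C$, since $|k|\ge1$. The mode $k=0$ is handled directly: there the system is just the heat equation and its norm does not grow beyond a factor $e^{C t}$, which is harmless for $t<T$. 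Gronwall then gives
$$\|(\mathbf u_k,\mathbf b_k)(t)\|_{L^2_\alpha}\le C e^{\delta|k|t}e^{-\beta|k|},$$
which is precisely membership in $E_{\alpha,\beta-\delta t}$ with a uniform constant.

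\textbf{Step 5: existence, uniqueness and continuity.} For each fixed $k$, the modal problem is a linear parabolic system with bounded (in $L^2_\alpha$) lower-order perturbations. Existence and uniqueness of a solution in $C^0([0,T);L^2_\alpha)$ therefore follow from the heat semigroup (Dirichlet or Neumann) and a Picard/Duhamel iteration, or from Galerkin approximation using the same energy estimate. Uniqueness also follows from the estimate applied to the difference of two solutions.

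Continuity in time with values in $E_{\alpha,\beta-\delta T}$ follows from two facts. Each mode is continuous in time. The tail is controlled uniformly, since for $t<T$ the bound $e^{-(\beta-\delta t)|k|}$ still leaves the factor $e^{-\delta(T-t)|k|}$ of extra decay. Strictly, the sup-norm continuity needs a tiny extra margin; I would either use a slightly larger $\delta$ or shrink $\beta$ by an arbitrarily small amount.
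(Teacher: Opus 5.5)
Your proposal is correct and is essentially the standard mode-by-mode argument that the paper cites from \citep[Appendix~A]{lxyscm} without writing it out. It uses tangential Fourier decoupling, a weighted $L^2_\alpha$ energy estimate per mode that loses exactly one factor of $|k|$ through the nonlocal terms $w_k\partial_z\mathbf U^s$ and $h_k\partial_z\mathbf B^s$, and Gronwall, giving $\|(\mathbf u_k,\mathbf b_k)(t)\|_{L^2_\alpha}\lesssim e^{-(\beta-\delta t)|k|}$. Three minor remarks: your sup bound on $w_k,h_k$ needs $\alpha>0$ (tacit in this setting); the coupling terms $i(k\cdot\mathbf B^s)$ cancel exactly in the real part; and continuity on the half-open interval $[0,T)$ needs no extra margin, because on $[0,T-\eta]$ the tails are bounded by $e^{-\delta\eta|k|}$.
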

This proposition can be obtained by slightly modifying \citep[Appendix A]{lxyscm}. Hence, we omit the specific details here.

In particular, Proposition \ref{jiexi} establishes existence for initial data analytic in both tangential variables. Under additional structural assumptions, analyticity in one tangential direction can be relaxed to Sobolev regularity.
Such a result was established by Chen, Li and Yang \cite{CK} for the three-dimensional linearized MHD boundary layer system, under their magnetic non-degeneracy, regularity, and compatibility assumptions.

\begin{Remark}
The partial analyticity result in \cite{CK} is analogous to that in \cite[Proposition~2.2]{LWY}, but relies on a different underlying structure. The Prandtl result uses the normal monotonicity of a tangential velocity component, whereas the MHD result uses the non-degeneracy of a tangential magnetic field component and a cancellation mechanism involving the normal magnetic field. The degenerate magnetic shear flows considered in our ill-posedness theorem fall outside the assumptions of \cite{CK}.
\end{Remark}

Suppose that $(\mathbf u,\mathbf b)(t,\cdot)$ is a solution to the linear problem \eqref{mhd2} with the initial data $(\mathbf u,\mathbf b)\big|_{t=s}=(\mathbf u _0,\mathbf b_0)$. Define the solution operator
\begin{align}\label{Tsuanzi}
	\mathcal{T}(t,s)(\mathbf u _0,\mathbf b_0):=(\mathbf u,\mathbf b)(t,\cdot).
\end{align}
We introduce a Sobolev space
\begin{align*}
	H^m:= H^m(\mathbb{T}_{x,y}^2;L^2_\alpha(\mathbb{R}_z^+)),\q m,\alpha\geq0.
\end{align*}
Since $E_{\alpha,\beta}$ is dense in $H^m$, we can extend the operator $\mathcal{T}(t,s)$ from $E_{\alpha,\beta}$ to $H^{m}$, and define the norm of the operator $\mathcal{T}$ as
$$\|\mathcal{T}\|_{\mathcal{L}(H^{m_1},H^{m_2})}
:=\sup\limits_{(\mathbf u _0,\mathbf b_0)\in E_{\alpha,\beta}}
\frac{\|\mathcal{T}(t,s)(\mathbf u _0,\mathbf b_0)\|_{H^{m_2}}}{\|(\mathbf u _0,\mathbf b_0)\|_{H^{m_1}}}\in \mathbb{R}^+\cup\{\infty\}.$$
When $\|\mathcal{T}\|_{\mathcal{L}(H^{m_1},H^{m_2})}=\infty$, it indicates that $\mathcal{T}$ cannot be extended to a bounded operator in  $\mathcal{L}(H^{m_1},H^{m_2})$.

Then, the main result of this paper is stated as follows.
\begin{Theorem}\label{TH1}
Assume that
	\begin{align*}
		\mathbf U^s-\mathbf U^E\in \mathop{\cap}\limits_{i=0}^1 C^i(\mathbb{R_+};W_\alpha^{4-2i,\infty}(\mathbb{R}^{+})\cap H_\alpha^{4-2i}(\mathbb{R}^{+})),\\
		\mathbf B^s-\mathbf B^E \in \mathop{\cap}\limits_{i=0}^2 C^i(\mathbb{R_+};W_\alpha^{6-2i,\infty}(\mathbb{R}^{+})\cap H_\alpha^{6-2i}(\mathbb{R}^{+})).
	\end{align*}
Suppose that there exists $z_0>0$ such that
	\begin{align}\label{buxiangdeng}
		\begin{aligned}
			V'_s(z_0)U''_s(z_0)\neq U'_s(z_0)V''_s(z_0),\q
			\mathbf B_s(z_0)=\partial_z \mathbf B_s(z_0)=0.
	\end{aligned}\end{align}
After interchanging $x$ and $y$ if necessary, assume further that
\begin{align}\label{aQ}
 U_s'(z_0)\neq0,
 \qquad
 a:=\frac{V_s'(z_0)}{U_s'(z_0)}\in\mathbb Q.
\end{align}
Then there exists $\sigma>0$ such that for any $\delta>0,m\geq0$ and $\mu\in [0,\frac{1}{8})$,
	\begin{align}\label{bsdjl}
		\sup\limits_{0\leq s\leq t\leq\delta}
		\|e^{-\sigma(t-s)\sqrt{|\partial_{\mathfrak{h}}|}}\mathcal{T}(t,s)\|_{\mathcal{L}(H^{m},H^{m-\mu})}=+\infty,
	\end{align}
where the operator $\partial_{\mathfrak{h}}$ denotes the tangential derivative $\partial_x$ or $\partial_y$.
\end{Theorem}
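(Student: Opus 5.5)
We argue by contradiction, following the Gérard-Varet--Dormy scheme as adapted in \cite{LWY} and \cite{lxyscm}. Suppose that for some $\delta>0$, $m\ge0$, $\mu<\frac18$ the supremum in \eqref{bsdjl} is finite, say bounded by $C_0$. We use the rationality of $a$ in \eqref{aQ} to choose tangential wave vectors $\mathbf k=(k_1,k_2)=n(q,-p)$, $n\in\mathbb N$, where $a=p/q$. Then $\mathbf k\perp(U_s'(z_0),V_s'(z_0))$, so the combination $\mathbf k\cdot\mathbf U_s$ has a critical point at $z_0$. By \eqref{buxiangdeng} this critical point is non-degenerate: $\mathbf k\cdot\mathbf U_s''(z_0)\neq0$. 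Moreover, $\mathbf k\cdot\mathbf B_s$ vanishes to second order there.

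Set $\varepsilon=1/|\mathbf k|$ and $\tau=t/\sqrt\varepsilon$. We build an approximate solution of \eqref{mhd2} of the form $e^{i\mathbf k\cdot(x,y)/\varepsilon}\times$(profiles in $\tau$ and $z$), growing like $e^{\sigma t/\sqrt\varepsilon}$. The velocity part is the three-dimensional Prandtl construction of \cite{LWY}. It has a regular part solving the inviscid Rayleigh-type equation for the projected shear $\mathbf k\cdot\mathbf U^s$, with unstable eigenvalue $\omega$, $\operatorname{Im}\omega>0$, coming from the critical point. It also has a critical-layer corrector in the variable $(z-z_0)/\varepsilon^{1/4}$, plus the boundary sublayer.

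The magnetic part follows \cite{CJLJFA}. We take $h=\psi\,\mathbf k\cdot\mathbf B^s/(\mathbf k\cdot\mathbf U^s-\omega)$-type quotients and add a vector correction $\tilde{\mathbf b}$. This correction is chosen so that both components of the induction equation have their leading $O(\varepsilon^{-1})$ terms cancelled, and so that $\nabla_h\cdot\mathbf b+\partial_z h=0$ holds exactly. The natural choice is to write $\mathbf b=\nabla_h^\perp$-free part plus a gradient part fixed by $h$, and to put the remaining freedom in the direction orthogonal to $\mathbf k$.

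The double zero of $\mathbf B_s$ at $z_0$ is what makes this work. Near the critical layer, $\mathbf k\cdot\mathbf B^s=O((z-z_0)^2+t)$, and in the layer variable this is $O(\varepsilon^{1/2})$. The Lorentz-force terms $(\mathbf B^s\cdot\nabla_h)\mathbf b$ and $h\partial_z\mathbf B^s$ in the momentum equation then enter only as perturbations of the Prandtl construction. Bookkeeping powers of $\varepsilon^{1/4}$ should give a residual $R$ satisfying $\|R\|_{H^m}\lesssim \varepsilon^{-3/8}|\log\varepsilon|^{N}e^{\sigma t/\sqrt\varepsilon}\|(\mathbf u,\mathbf b)(0)\|$.

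To close the argument, write the true solution as the approximate one plus a Duhamel term $\int_0^t\mathcal T(t,s)R(s)\,ds$. For this term we use the assumed bound $\|\mathcal T(t,s)\|_{\mathcal L(H^m,H^{m-\mu})}\le C_0e^{\sigma(t-s)\sqrt{|\mathbf k|}}$ on the single Fourier mode $\mathbf k$. Since $\sqrt{|\mathbf k|}=\varepsilon^{-1/2}$, the error is bounded by a constant times $\varepsilon^{-\mu}\cdot\varepsilon^{-3/8}t\,e^{\sigma t/\sqrt\varepsilon}$ times the initial norm. This is smaller than the main part, which is of order $\varepsilon^{-1/2}$ relative to it owing to the critical-layer amplitude normalisation, exactly when $\mu<\frac18$.

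For this comparison we take $\sigma$ strictly smaller than $\operatorname{Im}\omega\cdot|\ldots|$, and $t=K\sqrt\varepsilon|\log\varepsilon|$ with $K$ large. At that time the approximate solution has $H^{m-\mu}$ norm at least $\varepsilon^{-K'}$ times its initial norm, which beats $C_0e^{\sigma t/\sqrt\varepsilon}$. Letting $\varepsilon\to0$ (so $t\le\delta$) contradicts the assumed bound.

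The main obstacle is the magnetic vector correction. It must simultaneously cancel the singular terms $-h\,\partial_z\mathbf U^s$ in both induction equations, whose denominators $\mathbf k\cdot\mathbf U^s-\omega$ are small in the critical layer, and preserve the divergence constraint. Its size also has to be controlled so that its feedback into the momentum equation costs at most $\varepsilon^{-3/8}$. We would first try taking $\mathbf b$ parallel to $\partial_z\mathbf U^s$ divided by $(\mathbf k\cdot\mathbf U^s-\omega)$, which is the 2D quotient applied componentwise. We would then fix the divergence with an orthogonal component proportional to $\mathbf k^\perp$, which does not affect $\mathbf k\cdot\mathbf b$.
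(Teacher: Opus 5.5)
Your overall strategy matches the paper's: the mode $e^{i\e^{-1}(y-ax)}$ carrying the Liu--Wang--Yang critical-layer profile, the scalar quotient $h\propto c^sQ_\e/A_\e$, a residual of size $\e^{-3/8}L_\e^N$, and Duhamel at $t\sim\sqrt\e|\log\e|$. Here $c^s=g^s-ab^s$, $A_\e=\lambda_\e+w_a^s$ is your $\mathbf k\cdot\mathbf U^s-\omega$ up to normalisation, and $Q_\e$ is the normal-velocity amplitude. The genuine gap is the one new ingredient, the vector magnetic amplitude. You leave it open, and the concrete recipe you end with would fail:
\begin{itemize}
\item A component along $\mathbf k^\perp$ does not enter $\nabla_h\cdot\mathbf b+\partial_z h$ at all, so it cannot ``fix the divergence''.
\item Taking $\mathbf b\parallel\partial_z\mathbf U^s/(\mathbf k\cdot\mathbf U^s-\omega)$ is already wrong in the regular region $z>f(t)$. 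There $Q_\e=A_\e$ and $\boldsymbol{\mathcal U}_\e=\partial_z\mathbf U^s$, so for any $h$-amplitude $\mathcal H$, cancelling the $O(\e^{-1})$ induction terms forces $\partial_z\mathbf B^s+\frac{c^s-\mathcal H}{A_\e}\partial_z\mathbf U^s$. This is not parallel to $\partial_z\mathbf U^s$ unless $\partial_z\mathbf B^s$ is, so your choice leaves an $O(\e^{-1})$ residual on a set of fixed size.
\end{itemize}
Your middle paragraph had the right split: the $\mathbf k$-part is fixed by $h$, and the $\mathbf k^\perp$-part is free. To use it, take $\mathcal H=c^s\rho_\e$ with $\rho_\e=Q_\e/A_\e$. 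Then solve the $O(\e^{-1})$ bracket of both induction equations algebraically, which is possible since $|A_\e|\geq cd^2(1+Z^2)$:
\[
\boldsymbol{\mathcal B}_\e=\frac{1}{A_\e}\big(Q_\e\partial_z\mathbf B^s+c^s\boldsymbol{\mathcal U}_\e-c^s\rho_\e\partial_z\mathbf U^s\big)=\rho_\e\partial_z\mathbf B^s+\frac{c^s}{A_\e}\boldsymbol{\mathcal D}_\e,\qquad \boldsymbol{\mathcal D}_\e=\boldsymbol{\mathcal U}_\e-\rho_\e\partial_z\mathbf U^s .
\]
The divergence identity $(-a,1)\cdot\boldsymbol{\mathcal B}_\e=\partial_z(c^s\rho_\e)$ then holds automatically. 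It follows from $(-a,1)\cdot\boldsymbol{\mathcal U}_\e=\partial_zQ_\e$ and $(-a,1)\cdot\partial_z\mathbf U^s=\partial_zA_\e$.

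This explicit form is also what makes the residual bound work, which you only assert ``by bookkeeping''. The Lorentz terms are not perturbative globally. Off the layer $c^s=O(1)$ and $\boldsymbol{\mathcal B}_\e=H(z-f(t))\partial_z\mathbf B^s$, so $(\mathbf B^s\cdot\nabla_h)\mathbf b$ and $h\partial_z\mathbf B^s$ are each of size $\e^{-1}$. They combine exactly into $\e^{-1}(c^s)^2\boldsymbol{\mathcal D}_\e/A_\e$, and $\boldsymbol{\mathcal D}_\e$ vanishes off the layer. Only inside the layer does the double zero enter: $c^s=O(d^2L_\e(1+Z^2))$ with $d=\e^{1/4}$ gives a pointwise bound $d^{-2}$, hence $\e^{-3/8}$ in $L^2_\alpha$. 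Likewise, the induction residual $iE_\e(\partial_t-\partial_z^2)\boldsymbol{\mathcal B}_\e$ requires checking $[\boldsymbol{\mathcal B}_\e]=[\partial_z\boldsymbol{\mathcal B}_\e]=0$ at $z=f(t)$, so that no Dirac masses appear.

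Two smaller slips. First, $n(q,-p)$ is not orthogonal to $(U_s',V_s')(z_0)\propto(q,p)$; you need $n(-p,q)$. Second, in the Duhamel step the $\sqrt\e$ gain comes from
\[
\int_0^te^{\sigma(t-s)/\sqrt\e}e^{\sigma_0 s/\sqrt\e}\,ds\leq\frac{\sqrt\e}{\sigma_0-\sigma}e^{\sigma_0t/\sqrt\e},\qquad \sigma<\sigma_0,
\]
not from any amplitude normalisation. The hypothesis rate $\sigma$ must be kept distinct from the growth rate $\sigma_0$ throughout.
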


\begin{Remark}
The loss obtained here is weaker than in the related Prandtl and MHD results. The two-dimensional Prandtl result \cite{JAMS} gives $\mu<\frac12$, whereas both the three-dimensional Prandtl theorem \cite{LWY} and the two-dimensional MHD theorem \cite{lxyscm} give $\mu<\frac14$. In the present problem, the residual $O(\e^{-3/8}L_\e^N)$ and the $\sqrt\e$ factor from Duhamel's formula yield $\mu<\frac18$.  We do not claim
that this range is optimal.
\end{Remark}
\begin{Remark}\label{adejiashe}
The rationality assumption on $a$ is used only to ensure the periodicity of the high-frequency modes on $\mathbb T^2$. Indeed, if $a=\frac pq\in\mathbb Q$ and $\varepsilon_n=(qn)^{-1}$, then
$ e^{i\varepsilon_n^{-1}(y-ax)}=e^{i(qny-pnx)}$
is periodic in both tangential variables. This mode always has a high frequency in $y$ and, when $a\neq0$, also in $x$.
\end{Remark}
\begin{Remark}\label{adejiashe}
The rationality assumption on $a$ is used only to ensure the periodicity of the high-frequency modes on $\mathbb T^2$. Indeed, if $a=\frac pq\in\mathbb Q$ and $\varepsilon_n=(qn)^{-1}$, then $e^{i\varepsilon_n^{-1}(y-ax)}=e^{i(qny-pnx)}$ is periodic in both tangential variables.

In the three-dimensional Prandtl problem \cite{LWY}, the critical point may be slightly shifted so that $\frac{V_s'}{U_s'}$ is rational, by the non-degeneracy condition and the density of $\mathbb Q$. Here such a shift would generally destroy the additional condition $\mathbf B_s=\partial_z\mathbf B_s=0$. Therefore, $a\in\mathbb Q$ is
imposed as a technical compatibility condition for the present single-mode construction on $\mathbb T^2$.
\end{Remark}
%\begin{Remark}
%Unlike the non-resistive model in \cite{CJLJFA}, magnetic diffusion makes the background magnetic shear time dependent and creates an additional induction residual. We impose the corresponding double-zero condition on both tangential magnetic components.
%\end{Remark}
\begin{Remark}
Compared with the two-dimensional resistive result in \cite{lxyscm}, the present construction avoids the sixth-order degeneracy assumption on the magnetic shear and requires only a double zero at the critical point. On the other hand, unlike the non-resistive model in \cite{CJLJFA}, magnetic diffusion makes the background magnetic shear
time dependent and produces an additional induction residual. This motivates imposing the double-zero condition on both tangential magnetic components.
\end{Remark}

\section{The linear instability mechanism.}
In the absence of a magnetic field, the system \eqref{mhd} reduces to the classical Prandtl equations.
The linearized Prandtl equations near the shear flow $(u^s,v^s,0)$ in three-dimensional space read
%For the linearized Prandtl equations near the shear flow $(u^s,v^s,0)$ in three-dimensional space:
\begin{align}\label{3dprandtl}
	\left\{\begin{array}{l}
		\partial_{t}u+(u^s \partial_{x}+v^s \partial_{y})u+w\partial_{z}u^s-\partial_{z}^{2} u=0, \\
		\partial_{t}v+(u^s \partial_{x}+v^s \partial_{y})v+w\partial_{z}v^s-\partial_{z}^{2} v=0, \\
		\partial_x u+\partial_y v+\partial_z w=0,\\
		(u,v,w)\big|_{z=0}=(0,0,0),\quad \lim\limits_{z\rightarrow\infty}(u,v)=(0,0).
	\end{array}\right.
\end{align}
Liu, Wang and Yang \cite{LWY} established the ill-posedness of \eqref{3dprandtl} in Sobolev spaces.
Their argument first freezes the background shear flow in \eqref{3dprandtl} at $t=0$, that is, they set $(u^s,v^s)=(U_s,V_s)$. Then \eqref{3dprandtl} takes the following form
\begin{align}\label{3dprandtl0}
	\left\{\begin{array}{l}
		\partial_{t}u+(U_s \partial_{x}+V_s \partial_{y})u+wU'_s-\partial_{z}^{2} u=0, \\
		\partial_{t}v+(U_s \partial_{x}+V_s \partial_{y})v+wV'_s-\partial_{z}^{2} v=0, \\
		\partial_x u+\partial_y v+\partial_z w=0,\\
		(u,v,w)\big|_{z=0}=(0,0,0),\quad \lim\limits_{z\rightarrow+\infty}(u,v)=(0,0).
	\end{array}\right.
\end{align}
%Assuming $U'_s(z_0)\neq0$,  let
Set $W_s(z):=V_s(z)-aU_s(z)$. By the definition of $a$ and the determinant
condition \eqref{buxiangdeng},
$$ W_s'(z_0)=0,\qquad W_s''(z_0)\neq0.$$
Hence $z=z_0$ is a non-degenerate critical point of $W_s$. In the sequel, $\varepsilon$ is restricted to the sequence specified in Remark~\ref{adejiashe}. We seek approximate solutions of the form
%Since the determinant condition in \eqref{buxiangdeng} ensures that $U'_s(z_0)$ and $V'_s(z_0)$ cannot vanish simultaneously, we may assume, without loss of generality, that $U'_s(z_0)\neq0$.
%Let
%\begin{align}\label{a}
%	a:=\frac{V'_s(z_0)}{U'_s(z_0)}.
%\end{align}
%Following \cite[Section~3.1]{LWY}, by perturbing $z_0$ slightly if necessary, we may assume that $a=\frac{p}{q}$ with coprime integers $p,q$ and $q>0$.  In the sequel we take
%\begin{align}\label{epsilon-sequence}
% \varepsilon=\frac1{qk},\qquad k\in\mathbb N,
%\end{align}
%so that both tangential frequencies $\varepsilon^{-1}$ and $a\varepsilon^{-1}$ are integers. Then, from \eqref{buxiangdeng}, it is easy to see that $z=z_0$  is a non-degenerate critical point of the tangential velocity $W_s(z)\triangleq V_s(z)-aU_s(z)$. Next, we consider the solutions of the following form
\begin{align}\label{uvw}
	\left\{\begin{array}{l}
		(u,v)(t,x,y,z)=e^{i\varepsilon^{-1}(y-ax+\lambda_\varepsilon t)}(u_\varepsilon,v_\varepsilon)(z),\\
		w(t,x,y,z)=-i\varepsilon^{-1}e^{i\varepsilon^{-1}(y-ax+\lambda_\varepsilon t)}w_\varepsilon(z).
	\end{array}\right.
\end{align}
Using $\eqref{3dprandtl0}_3$, we have
\begin{align}\label{w1}
	w'_\varepsilon=v_\varepsilon-au_\varepsilon.
\end{align}
Substituting \eqref{uvw} and \eqref{w1} into \eqref{3dprandtl0} yields an ODE for $w_\varepsilon$
\begin{align*}
	\left\{\begin{array}{l}
		(\lambda_\varepsilon+W_s)w'_\varepsilon-W'_sw_\varepsilon+i\varepsilon w_\varepsilon^{(3)}=0,\\
		w_\varepsilon(0)=w'_\varepsilon(0)=0.
	\end{array}\right.
\end{align*}
%For the aforementioned problem, Liu-Wang-Yang \cite{LWY} derived the following asymptotic expansions for $\lambda_\varepsilon$ and $w_\varepsilon$ in terms of $\varepsilon$
Liu, Wang and Yang \cite{LWY} obtained the expansions
\begin{align*}
	\left\{\begin{array}{l}
		\lambda_\varepsilon=-W_s(z_0)+\sqrt{\varepsilon}\tau,\\
		w_\varepsilon= H(z-z_0)\big[W_s(z)-W_s(z_0)+\sqrt{\varepsilon}\tau\big]
		+\sqrt{\varepsilon}W\Big(\frac{z-z_0}{\varepsilon^{1/4}}\Big),
	\end{array}\right.
\end{align*}
where $H(z)$  is the Heaviside function, $\tau$  is a complex number with $\mathcal{I}m~\tau<0$, and $W(Z)$solves the following ODE
\begin{align*}
	\left\{\begin{array}{l}
		(\tau+W^{''}_s(z_0)\frac{Z^2}{2})W'-W^{''}_s(z_0)ZW+iW^{(3)}=0,\q  Z\neq0,\\
		{[W]|_{Z=0}=-\tau,[W']|_{Z=0}=0,[W^{''}]|_{Z=0}=-W^{''}_s(z_0)},\\
		\lim\limits_{Z\rightarrow\pm\infty}W=0.
	\end{array}\right.
\end{align*}
Here, $[W]|_{Z=0}
=\lim\limits_{\delta_1\rightarrow0^{+}}W(\delta_1)-\lim\limits_{\delta_2\rightarrow0^{-}}W(\delta_2)$ denotes the jump of $W(Z)$ at $Z=0$.
%According to G\'{e}rard-Varet-Dormy's work in \cite{JAMS}, $\big(\tau,W(Z)\big)$ has the form
According to the work of G\'{e}rard-Varet and Dormy in \cite{JAMS}, the aforementioned ODE admits a solution $W(Z)$ that decays exponentially as $|Z|\rightarrow\infty$. Specifically, $\big(\tau,W(Z)\big)$ takes the following form
\begin{align}\label{jinsizhanshi}
	\left\{\begin{array}{l}
		\tau=\big|\frac{W^{''}_s(z_0)}{2}\big|^{\frac{1}{2}}\tilde{\tau},\\
		W(Z)=\big|\frac{W^{''}_s(z_0)}{2}\big|^{\frac{1}{2}}
		\Big[\big(\tilde{\tau}+\big|\frac{W^{''}_s(z_0)}{2}\big|^{\frac{1}{2}}Z^2\big)
		\tilde{W}\big(\big|\frac{W^{''}_s(z_0)}{2}\big|^{\frac{1}{4}}Z\big)
		-\mathrm{1}_{\mathbb{R_+}}\big(\tilde{\tau}
		+\big|\frac{W^{''}_s(z_0)}{2}\big|^{\frac{1}{2}}Z^2\big)\Big].
	\end{array}\right.
\end{align}
The complex number $\tilde\tau$ and the profile $\tilde W$ are supplied by condition (SC) in \cite[Sections~2-3]{JAMS}.
To obtain an approximate solution for \eqref{3dprandtl0}, we also need to construct the asymptotic expansion of $(u_\e,v_\e)(z)$. Based on the relationship in \eqref{w1} and the asymptotic expansion in \eqref{jinsizhanshi}, \cite{LWY} derived the following result.
\begin{Lemma}\citep[Proposition 3.2]{LWY}\label{LE3.1}
	For the large frequency $k=\frac{1}{\e}$, the approximate solutions of the
problem \eqref{3dprandtl0} can be expressed as \eqref{uvw} with
	\begin{align}\label{jinsizhanshiuv}
		\left\{\begin{array}{l}
			\lambda_\varepsilon=-W_s(z_0)+\sqrt{\varepsilon}\tau,\\
			w_\varepsilon= H(z-z_0)\big[W_s(z)-W_s(z_0)+\sqrt{\varepsilon}\tau\big]
			+\sqrt{\varepsilon}W\Big(\frac{z-z_0}{\varepsilon^{1/4}}\Big),\\
			(u_\e,v_\e)(z)\sim H(z-z_0)\big(U'_s(z),V'_s(z)\big)
			+W''\Big(\frac{z-z_0}{\varepsilon^{1/4}}\Big)\frac{1}{W^{''}_s(z_0)}\big(U'_s(z_0),V'_s(z_0)\big)\\
			\q\q\q\q\q+\e^{\frac{1}{4}}W'\Big(\frac{z-z_0}{\varepsilon^{1/4}}\Big)
			\frac{1}{W^{''}_s(z_0)}\big(U''_s(z_0),V''_s(z_0)\big),
		\end{array}\right.
	\end{align}
where the complex constant $\tau$ and function $W(Z)$ are given in \eqref{jinsizhanshi}.
\end{Lemma}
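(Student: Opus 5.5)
The plan is to derive $(u_\e,v_\e)$ directly from the momentum equations of \eqref{3dprandtl0}, using the expansion of $(\lambda_\e,w_\e)$ already obtained from the $w_\e$-equation. Then I will check consistency with the divergence relation \eqref{w1}. Substituting \eqref{uvw} into the first two equations of \eqref{3dprandtl0} gives, since $-a\partial_x+\partial_y$ acting on the phase produces $i\e^{-1}$ and $U_s\partial_x+V_s\partial_y$ produces $i\e^{-1}W_s$,
\begin{align*}
(\lambda_\e+W_s)\,u_\e-U_s'\,w_\e+i\e\,u_\e''=0,\qquad
(\lambda_\e+W_s)\,v_\e-V_s'\,w_\e+i\e\,v_\e''=0 .
\end{align*}
I would solve these separately in the outer region $|z-z_0|\gg\e^{1/4}$ and in the critical layer $Z=(z-z_0)/\e^{1/4}=O(1)$. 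Then I will glue the two pieces.

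\emph{Outer region.} Here the viscous term is lower order. Hence $u_\e\approx U_s'w_\e/(\lambda_\e+W_s)$ and $v_\e\approx V_s'w_\e/(\lambda_\e+W_s)$. For $z>z_0$ we have $w_\e\approx W_s-W_s(z_0)+\sqrt\e\tau=\lambda_\e+W_s$, so $(u_\e,v_\e)\approx(U_s',V_s')$. For $z<z_0$ the function $w_\e$ is exponentially small in $Z$, so $(u_\e,v_\e)\approx0$. This produces the term $H(z-z_0)(U_s',V_s')$.

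\emph{Critical layer.} Set $c=W_s''(z_0)/2$ and $\Phi(Z)=H(Z)(\tau+cZ^2)+W(Z)$. Then:
\begin{itemize}
\item $\lambda_\e+W_s=\sqrt\e(\tau+cZ^2)+O(\e^{3/4})$;
\item $w_\e=\sqrt\e\,\Phi+O(\e^{3/4})$;
\item $U_s'(z)=U_s'(z_0)+\e^{1/4}ZU_s''(z_0)+O(\e^{1/2})$;
\item $i\e\partial_z^2=i\sqrt\e\,\partial_Z^2$.
\end{itemize}
The jump conditions on $W$ make $\Phi$ smooth across $Z=0$, and $\Phi$ solves $(\tau+cZ^2)\Phi'-2cZ\Phi+i\Phi'''=0$ on all of $\mathbb R$. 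The key observation is obtained by differentiating this equation once:
\begin{align*}
(\tau+cZ^2)\Phi''-W_s''(z_0)\,\Phi+i\Phi''''=0 .
\end{align*}
So at order $\sqrt\e$ the choice $u^{(0)}=U_s'(z_0)\Phi''/W_s''(z_0)$ solves $(\tau+cZ^2)u-U_s'(z_0)\Phi+iu''=0$ exactly. Since $\Phi''=W_s''(z_0)H+W''$, this reproduces $H(z-z_0)U_s'(z_0)+W''(Z)U_s'(z_0)/W_s''(z_0)$.

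At order $\e^{3/4}$ the forcing is $U_s''(z_0)Z\Phi$. Since $\Phi'$ satisfies the undifferentiated equation, the corrector is taken to be $\e^{1/4}U_s''(z_0)\Phi'/W_s''(z_0)$ up to terms absorbed in the outer expansion of $U_s'(z)$. This gives the $\e^{1/4}W'(Z)U_s''(z_0)/W_s''(z_0)$ term, and the same computation applies to $v_\e$.

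As a consistency check, $v_\e-au_\e$ computed from the ansatz equals $H W_s'+\frac{W''}{W_s''(z_0)}W_s'(z_0)+\e^{1/4}W'$. Because $W_s'(z_0)=0$, this coincides with $w_\e'=H W_s'+\e^{1/4}W'(Z)$, so \eqref{w1} holds to leading order.

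The main obstacle is the matching and error control. One must verify that the inner and outer pieces agree in the overlap region, using the exponential decay of $W,W',W''$ as $|Z|\to\infty$ supplied by \cite{JAMS}. One must also check that the boundary condition at $z=0$ holds up to exponentially small errors. Finally, the residual of the $(u,v)$ equations must be $O(\e^{1/2})$ relative to the leading terms in $L^2_\alpha$, uniformly. I would handle these by multiplying the layer profiles by a cutoff near $z_0$ and bounding the commutator terms using the exponential decay.
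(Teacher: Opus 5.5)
Your argument is correct and reproduces the stated formula. The paper gives no derivation of its own here: it quotes \cite[Proposition 3.2]{LWY} and only remarks that the expansion comes from \eqref{w1} together with the expansion of $w_\e$.

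\textbf{Comparison with the paper.} Since \eqref{w1} fixes only the combination $v_\e-au_\e$, some use of the two momentum equations is unavoidable, and your proposal supplies exactly that. Your key identity is that $\Phi''$ and $\Phi'$ solve $(\tau+cZ^2)u+iu''=F$ with $F=W_s''(z_0)\Phi$ and $F=W_s''(z_0)Z\Phi$, respectively. This explains why the layer correction splits into a $W''$ term with coefficient $\mathbf U_s'(z_0)/W_s''(z_0)$ and an $\e^{1/4}W'$ term with coefficient $\mathbf U_s''(z_0)/W_s''(z_0)$. What the citation provides instead are the quantitative profile and residual bounds used later in \eqref{prandtl-profile-estimates}, which you only sketch.

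\textbf{The matching step is easier than you expect.} For the composite formula it needs no overlap region:
\begin{itemize}
\item The outer piece $H(z-z_0)\mathbf U_s'(z)$ cancels $\mathbf U_s'W_\e^{\rm reg}$ exactly, because $W_\e^{\rm reg}=H(z-z_0)(\lambda_\e+W_s)$. Only $i\e H\mathbf U_s'''$ is left over from it.
\item The jump conditions on $W$, together with $[W''']=0$ (which follows from the ODE at $Z=0^\pm$ since $[W']=0$), make the full ansatz $C^1$ across $z_0$. Hence $i\e u_\e''$ produces no Dirac masses.
\end{itemize}
So the residual can be computed directly.

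\textbf{First correction: the order-$\e^{3/4}$ forcing is incomplete.} It is not only $U_s''(z_0)Z\Phi$. The cubic Taylor term $\frac16W_s'''(z_0)(z-z_0)^3$ enters both $\lambda_\e+W_s$ and $w_\e$. Their Heaviside parts cancel, but the residual of $(\lambda_\e+W_s)u_\e-U_s'w_\e+i\e u_\e''$ retains $\e^{3/4}\frac{U_s'(z_0)W_s'''(z_0)}{6W_s''(z_0)}Z^3W''(Z)$, and similarly for $v_\e$. The formula does not correct this term. Thus the $\e^{1/4}W'$ term is not a complete next-order corrector, and in the layer the error is only $\e^{1/4}$ smaller than the $O(\sqrt\e)$ leading terms. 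This is harmless for a statement with $\sim$, since the term is exponentially localized, but it belongs in the error budget.

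\textbf{Second correction: your consistency check is exact.} It holds identically, not merely to leading order: $v_\e-au_\e=H(z-z_0)W_s'+\e^{1/4}W'(Z)=w_\e'$, because $V_s'(z_0)-aU_s'(z_0)=0$ and $V_s''(z_0)-aU_s''(z_0)=W_s''(z_0)$. The paper relies on this exactness in \eqref{-a1} to obtain the magnetic divergence constraint exactly.
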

In order to construct  approximate solutions to \eqref{3dprandtl}, let
%Next, they construct approximate solutions for \eqref{3dprandtl}.Define
$$w_a^s(t,z)\triangleq v^s(t,z)-au^s(t,z),$$
where $a$ is the constant given in \eqref{aQ}, and $(u^s,v^s)$ meets the requirements of Theorem \ref{TH1}. Clearly, $z_0$ is a non-degenerate critical point of $w_a^s(0,z)$. After applying the equivalent sign normalization used in the critical-layer profile if necessary, we may assume $\partial_z^2 w_a^s(0,z_0)<0$; the instability criterion itself is independent of this sign. Let $f(t)$ be a non-degenerate critical point of $w_a^s(t,z)$, satisfying the following equation in a small time interval $(0, t_0)$,
\begin{align}\label{ft}
	\left\{\begin{array}{l}
		\pt\pz w_a^s\big(t,f(t)\big)+\pz^2w_a^s\big(t,f(t)\big)f'(t)=0,\\
		f(0)=z_0.
	\end{array}\right.
\end{align}
\begin{Lemma}\citep[Section 3.2]{LWY}\label{LE3.2}
The approximate solution for problem \eqref{3dprandtl} is defined as
%\begin{align}\label{3djinsi}
%	(u_\e,v_\e,w_\e)(t,x,y,z)=e^{i\e^{-1}(y-ax)}(U_\e,V_\e,W_\e)(t,z).
%\end{align}
\begin{align}\label{jinsizhanshiuv1}
	\left\{\begin{array}{l}
\lambda_\varepsilon=-w_a^s\big(t,f(t)\big)
		+\sqrt{\varepsilon}\big|\frac{\pz^2w_a^s\big(t,f(t)\big) }{2}\big|^{\frac{1}{2}}\tilde{\tau},\\
(u_\e,v_\e,w_\e)(t,x,y,z)=e^{i\e^{-1}(y-ax)}(U_\e,V_\e,W_\e)(t,z),\\		
		W_\varepsilon(t,z)=\e^{-1}e^{i\e^{-1}\int_0^t\lambda_\varepsilon(s)\ ds}\Big(W_\e^{reg}(t,z)+W_\e^{sl}(t,z)\Big),\\
		(U_\e,V_\e)(t,z)=ie^{i\e^{-1}\int_0^t\lambda_\varepsilon(s)\ ds}\boldsymbol{\mathcal U}_\e
	\end{array}\right.
\end{align}
with
\begin{align*}
	&(U_\e^{reg},V_\e^{reg})(t,z)=H(z-f(t))\partial_z\mathbf U^s(t,z),\\
	&W_\e^{reg}(t,z)=H\big(z-f(t)\big)\Big[w_a^s(t,z)-w_a^s(t,f(t))+\sqrt{\varepsilon}\big|\frac{\pz^2w_a^s\big(t,f(t)\big) }{2}\big|^{\frac{1}{2}}\tilde{\tau}\Big],\\
	&W_\e^{sl}(t,z)=\sqrt{\e}\varphi(z-f(t))\big|\frac{\pz^2w_a^s\big(t,f(t)\big) }{2}\big|^{\frac{1}{2}}
	W_{sl}\Big(\big|\frac{\pz^2w_a^s\big(t,f(t)\big) }{2}\big|^{\frac{1}{4}}\cdot\frac{z-f(t)}{\e^{\frac{1}{4}}}\Big),\\
	&W_{sl}(Z)=(\tilde{\tau}-Z^2)\tilde{W}(Z)-\mathrm{1}_{\mathbb{R}^+}(\tilde{\tau}-Z^2),\\
&\boldsymbol{\mathcal U}_\e(t,z)=\big(U_\e^{reg},V_\e^{reg}\big)(t,z)
		+\frac{\pz^2W_\e^{sl}(t,z)}{\pz^2w_a^s\big(t,f(t)\big)}\partial_z\mathbf U^s\big(t,f(t)\big)+\frac{\pz W_\e^{sl}(t,z)}{\pz^2w_a^s\big(t,f(t)\big)}\partial_z^2\mathbf U^s\big(t,f(t)\big).
\end{align*}
\end{Lemma}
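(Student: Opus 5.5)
The plan is to verify Lemma~\ref{LE3.2} by substituting the ansatz \eqref{jinsizhanshiuv1} into \eqref{3dprandtl} and showing that the residual is smaller than the leading terms by a positive power of $\e$ (up to logarithms). The reference point is the frozen-time construction of Lemma~\ref{LE3.1}, applied at each instant $t$ to the profile $w_a^s(t,\cdot)$, whose critical point is $f(t)$.

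First, I record why the ansatz is consistent with incompressibility. Writing $(u,v)=e^{i\e^{-1}(y-ax)}(U_\e,V_\e)$, the divergence-free condition becomes $\partial_z W_\e = i\e^{-1}(V_\e-aU_\e)$ up to the normalization in \eqref{jinsizhanshiuv1}. Taking the combination $\eqref{3dprandtl}_2 - a\,\eqref{3dprandtl}_1$ eliminates the tangential unknowns and gives a scalar equation for $W_\e$ alone. Its principal part is
\[
\big(\lambda_\e+w_a^s\big)\partial_z W - \partial_z w_a^s\, W + i\e\,\partial_z^3 W,
\]
plus a term $-i\e\,\partial_t\partial_z W$ from the time dependence.

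\emph{Step 1 (regular part).} Away from $z=f(t)$, note that $W_\e^{reg}=\lambda_\e+w_a^s$ for $z>f(t)$. Hence $(\lambda_\e+w_a^s)\partial_zW^{reg}-\partial_zw_a^sW^{reg}=0$ exactly. The only remaining contributions are $i\e\,\partial_z^3W^{reg}=O(\e)$ and the time derivative of $W^{reg}$, which is $O(\e)$ relative to the $O(1)$ terms.

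\emph{Step 2 (shear layer).} Pass to $Z=|\pz^2w_a^s(t,f(t))/2|^{1/4}(z-f(t))\e^{-1/4}$ and Taylor expand $w_a^s(t,z)$ around $f(t)$. The linear term vanishes because $\pz w_a^s(t,f(t))=0$; this is precisely what \eqref{ft} guarantees for all small $t$. The quadratic term together with $\sqrt\e\tau$ reproduces the profile ODE solved by $W_{sl}$ in \eqref{jinsizhanshi}, so the $O(\e^{1/2})$ layer terms cancel. The cubic Taylor remainder leaves an error of relative size $\e^{1/4}$. The jump conditions $[W]=-\tau$, $[W']=0$, $[W'']=-W_s''$ are designed so that the Heaviside discontinuity of $W^{reg}$ and its derivatives at $z=f(t)$ cancel the distributional parts. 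Hence no Dirac masses survive. The cutoff $\varphi$ produces only exponentially small errors, since $W_{sl}$ decays exponentially.

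\emph{Step 3 (time dependence).} Differentiating the phase $e^{i\e^{-1}\int_0^t\lambda_\e}$ produces exactly the $\lambda_\e(t)$ term, because $\lambda_\e$ was defined pointwise in $t$. Differentiating the layer variable produces $\e^{-1/4}f'(t)W_{sl}'$-type terms and $\partial_t$ of the amplitude factors. Since these appear in the combination $\e\,\partial_t$, they are $O(\e^{3/4})$ relative to the leading $O(1)$ balance. The growth rate follows from $\mathcal Im\,\tilde\tau<0$: the phase gives $|e^{i\e^{-1}\int\lambda_\e}|=e^{-\e^{-1/2}\int_0^t|\pz^2w_a^s/2|^{1/2}\mathcal Im\tilde\tau}$.

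\emph{Step 4 (tangential components).} I recover $(U_\e,V_\e)$ from each momentum equation separately, using $W_\e$. In the regular region the formula $H(z-f)\pz\mathbf U^s$ follows from the balance $(\lambda+w_a^s)U=\pz\mathbf U^s\,W$. In the layer, I expand $\pz\mathbf U^s$ to first order about $f(t)$. This gives the terms $\pz^2W^{sl}\,\pz\mathbf U^s(f)/\pz^2w_a^s(f)$ and $\pz W^{sl}\,\pz^2\mathbf U^s(f)/\pz^2w_a^s(f)$, and consistency with $w'=v-au$ is checked directly.

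The main obstacle is the bookkeeping in Steps 2--3. One must show that every residual term, including those from $\partial_t$ hitting the moving layer and from the second-order Taylor expansion of $\pz\mathbf U^s$ in Step 4, is bounded in $L^2_\alpha$ by $\e^{\gamma}$ times the size of the solution, for some $\gamma>0$, uniformly for $t\lesssim|\log\e|\sqrt\e$. I would first check the cancellation of the singular layer terms at $z=f(t)$ via the jump conditions. After that, the remaining estimates are routine scaling computations in $Z$.
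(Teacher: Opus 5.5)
The paper gives no proof of its own here. It imports the construction and its estimates from \cite[Section~3.2]{LWY}, and your direct substitution is essentially the argument of that source: exact cancellation of the regular part, jump conditions removing the Dirac masses at $z=f(t)$, Taylor expansion about the moving critical point, and the phase producing $\lambda_\varepsilon$.

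One caution on the target in your last paragraph. In the unscaled momentum equations of \eqref{3dprandtl}, the residual is in general not small relative to the solution. The paper imports $\|E_\varepsilon\mathbf P_\varepsilon\|_{L^2_\alpha}\le C\varepsilon^{-1/4}e^{\sigma_0t/\sqrt\varepsilon}$, to be compared with $\|\boldsymbol{\mathcal U}_\varepsilon\|_{L^2_\alpha}\ge c$. The worst contributions are the term $-f'(t)\partial_z$ produced by $\partial_t$ and the cubic Taylor remainder of $A_\varepsilon$, both acting on the $O(1)$ layer part of $\boldsymbol{\mathcal U}_\varepsilon$ along $\partial_z\mathbf U^s(t,f(t))$. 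So an $\varepsilon^\gamma$ bound only makes sense in the $\varepsilon$-multiplied normalization of your Steps~1--3. There you need $\gamma\ge\frac34$, not merely $\gamma>0$, to recover the bound the paper uses later in the Duhamel argument. Your scaling does give this once you carry it out for that transverse component in Step~4.
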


\subsection{Construction of approximate solutions to \eqref{mhd2}.}

We now combine the Prandtl mode in Lemma~\ref{LE3.2} with the quotient construction of \cite[Section~4]{CJLJFA}.  The scalar quotient used in two dimensions cannot simply be applied component by component: doing so leaves an uncancelled leading term in the three dimensional induction equations.  The following vector version removes that
term and simultaneously preserves the magnetic divergence constraint.

Put
\begin{align}\label{AQDY}
\begin{aligned}
 A_\e(t,z)&:=\lambda_\e(t)+w_a^s(t,z),
 &Q_\e(t,z)&:=W_\e^{\rm reg}(t,z)+W_\e^{\rm sl}(t,z),\\
 \rho_\e(t,z)&:=\frac{Q_\e(t,z)}{A_\e(t,z)},
 &c^s(t,z)&:=g^s(t,z)-ab^s(t,z).
\end{aligned}
\end{align}
Although numerator and denominator in $\rho_\e$ are both of order $\sqrt\e$ in the critical layer, $\rho_\e$ is regular.  Indeed,
$$ \rho_\e=H(z-f(t))+\frac{W_\e^{\rm sl}}{A_\e},
 \qquad |A_\e(t,z)|\geq c\big(\sqrt\e+|z-f(t)|^2\big) $$
on the support of the cutoff, because $\Im\tilde\tau<0$.  The jump of the Heaviside
function is exactly cancelled by the shear-layer profile.

Using the tangential vectors defined above, write the velocity amplitudes in \eqref{jinsizhanshiuv1} as
\begin{align}\label{zfdy}
 (U_\e,V_\e)=iE_\e\boldsymbol{\mathcal U}_\e,\qquad
 W_\e=\e^{-1}E_\e Q_\e,\qquad
 E_\e(t):=\exp\!\left(i\e^{-1}\int_0^t\lambda_\e(s)\,ds\right).
\end{align}
The identity coming from incompressibility is
\begin{align}\label{-a1}
 ( -a,1)\cdot\boldsymbol{\mathcal U}_\e=\partial_zQ_\e.
\end{align}
Define the tangential magnetic amplitude by
\begin{align}\label{Bedy}
 \boldsymbol{\mathcal B}_\e
 :=\rho_\e\partial_z\mathbf B^s
 +\frac{c^s}{A_\e}
 \big(\boldsymbol{\mathcal U}_\e-\rho_\e\partial_z\mathbf U^s\big),
\end{align}
and set
\begin{align}\label{bghdy}
 (b_\e,g_\e,h_\e)(t,x,y,z):=e^{i\e^{-1}(y-ax)}(B_\e,G_\e,H_\e)(t,z)
 :=e^{i\e^{-1}(y-ax)}
 \left(iE_\e\boldsymbol{\mathcal B}_\e,
 \e^{-1}E_\e c^s\rho_\e\right).
\end{align}
Here the first entry in parentheses is a two-component vector. The magnetic divergence constraint follows directly
from the quotient structure.  Indeed, \eqref{-a1} and $Q_\e=A_\e\rho_\e$ give
\[
\begin{aligned}
 (-a,1)\cdot\boldsymbol{\mathcal B}_\e
 &=\rho_\e\partial_zc^s
 +\frac{c^s}{A_\e}
   \bigl(\partial_zQ_\e-\rho_\e\partial_zA_\e\bigr)\\
 &=\rho_\e\partial_zc^s+c^s\partial_z\rho_\e
 =\partial_z(c^s\rho_\e).
\end{aligned}
\]
Consequently,
$\partial_xb_\e+\partial_yg_\e+\partial_zh_\e=0$.
Because the cutoff is supported away from $z=0$, the magnetic amplitudes
also satisfy the boundary conditions in \eqref{mhd2}.
\begin{Remark}
The velocity construction and its determinant condition follow \cite{LWY}.
The magnetic ansatz extends the scalar quotient construction of
\cite{CJLJFA}. In three dimensions, the candidate
$\partial_z(\rho_\e\mathbf B^s)$ preserves magnetic divergence but does not
generally cancel both induction equations. Formula
\eqref{Bedy} adds the transverse correction
$\frac{c^s}{A_\e}\boldsymbol{\mathcal D}_\e
-(\partial_z\rho_\e)\mathbf B^s,
$
which is orthogonal to $(-a,1)$.
\end{Remark}

Direct calculation shows that the approximate solution $(u_\e,v_\e,w_\e,b_\e,g_\e,h_\e)$ satisfies
\begin{align}\label{jsmhd2}
	\left\{\begin{array}{l}
		\partial_{t}u_\e+(u^s \partial_{x}+v^s \partial_{y})u_\e+w_\e\partial_{z}u^s-\partial_{z}^{2} u_\e
		-(b^s \partial_{x}+g^s \partial_{y}) b_\e-h_\e \partial_{z}b^s
		=r_{1\e}, \\
		\partial_{t}v_\e+(u^s \partial_{x}+v^s \partial_{y})v_\e+w_\e\partial_{z}v^s-\partial_{z}^{2} v_\e
		-(b^s \partial_{x}+g^s \partial_{y}) g_\e-h_\e \partial_{z}g^s
        =r_{2\e}, \\
		\partial_{t}b_\e+(u^s \partial_{x}+v^s \partial_{y})b_\e+w_\e\partial_{z}b^s-\partial_{z}^{2} b_\e
		-(b^s \partial_{x}+g^s \partial_{y}) u_\e-h_\e \partial_{z}u^s
        =r_{3\e}, \\
		\partial_{t}g_\e+(u^s \partial_{x}+v^s \partial_{y})g_\e+w_\e\partial_{z}g^s-\partial_{z}^{2} g_\e
		-(b^s \partial_{x}+g^s \partial_{y}) v_\e-h_\e \partial_{z}v^s
        =r_{4\e}, \\
		\partial_x u_\e+\partial_y v_\e+\partial_z w_\e=0,\quad \partial_x b_\e+\partial_y g_\e+\partial_z h_\e=0,\\
		(u_\e,v_\e,w_\e)\big|_{z=0}=(\pz b_\e,\pz g_\e,h_\e)\big|_{z=0}=(0,0,0),\quad \lim\limits_{z\rightarrow+\infty}(u_\e,v_\e,b_\e,g_\e)=(0,0,0,0),
	\end{array}\right.
\end{align}
 where $r_{i\e}=e^{i\e^{-1}(y-ax)}R_{i\e}~(i=1,2,3,4)$. Write
$$\mathbf R_\e%=(\mathbf R_\e^u,\mathbf R_\e^b)
:=(R_{1\e},R_{2\e},R_{3\e},R_{4\e}),\q\q
\boldsymbol{\mathcal D}_\e
:=\boldsymbol{\mathcal U}_\e-\rho_\e\partial_z\mathbf U^s.
$$
Since $(U_\e,V_\e)=iE_\e\boldsymbol{\mathcal U}_\e$ and
$\partial_tE_\e=i\e^{-1}\lambda_\e E_\e$, direct substitution yields
\begin{align}\label{R12}
\begin{aligned}
(R_{1\e},R_{2\e})
&=E_\e\Big\{
i(\partial_t-\partial_z^2)\boldsymbol{\mathcal U}_\e
+\e^{-1}\big[-A_\e\boldsymbol{\mathcal U}_\e
+Q_\e\partial_z\mathbf U^s
+\frac{(c^s)^2}{A_\e}\boldsymbol{\mathcal D}_\e\big]\Big\}\\
&:=E_\e\Big\{
\mathbf P_\e
+\e^{-1}\frac{(c^s)^2}{A_\e}\boldsymbol{\mathcal D}_\e\Big\},
\end{aligned}\end{align}
where $E_\e\mathbf P_\e$ denotes the two Prandtl momentum residuals.
In the induction equations the frequency terms cancel exactly:
\[
-A_\e\boldsymbol{\mathcal B}_\e
+Q_\e\partial_z\mathbf B^s
+c^s\boldsymbol{\mathcal U}_\e
-c^s\rho_\e\partial_z\mathbf U^s=0.
\]
Consequently,
\begin{align}\label{R34}
(R_{3\e},R_{4\e})=iE_\e(\partial_t-\partial_z^2)
\boldsymbol{\mathcal B}_\e.
\end{align}

For the estimates below, write $\mathbf Z_\e=(U_\e,V_\e,B_\e,G_\e)$. The symbols $\mathcal U_\e$, $\mathcal B_\e$, and $\mathcal D_\e$ denote two-component amplitudes. We record the estimate needed below.

\begin{Proposition}\label{prop-improved-residual}
Let the assumptions of Theorem~\ref{TH1} hold and let
$L_\e:=1+|\log\e|$.  For every fixed $K_0>0$ there exist $N\geq1$, $\e_0>0$ and positive constants
$c_0,C_0,\sigma_0$, independent of $\e$, such that, for
\[
 0<\e<\e_0,\qquad 0\leq t\leq K_0\sqrt\e\,L_\e,
\]
the approximate tangential profiles satisfy
\begin{align}\label{growth-improved-mode}
 c_0e^{\sigma_0t/\sqrt\e}
 &\leq\|\mathbf Z_\e(t)\|_{L_\alpha^2}
 \leq C_0L_\e^N e^{\frac{\sigma_0t}{\sqrt\e}},\\
 \label{residual-improved-mode}
 \|\mathbf R_\e(t)\|_{L_\alpha^2}
 &\leq C_0\e^{-3/8}L_\e^N e^{\frac{\sigma_0t}{\sqrt\e}}.
\end{align}
In particular, $\|\mathbf Z_\e(0)\|_{L_\alpha^2}\leq C_0L_\e^N$.
\end{Proposition}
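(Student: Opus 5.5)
We bound the pieces of $\mathbf Z_\e$ and $\mathbf R_\e$ one at a time. Throughout, the factor $E_\e(t)$ supplies the growth, and the amplitudes are controlled in the critical-layer variable $Z=(z-f(t))/\e^{1/4}$.

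\textbf{Growth factor.} Since $\Im\tilde\tau<0$ and $\lambda_\e=-w_a^s(t,f(t))+\sqrt\e|\pz^2w_a^s/2|^{1/2}\tilde\tau$, we have
\[
|E_\e(t)|=\exp\Big(-\e^{-1/2}\int_0^t|\pz^2w_a^s(s,f(s))/2|^{1/2}\,\Im\tilde\tau\,ds\Big).
\]
On $0\le t\le K_0\sqrt\e L_\e$ the coefficient differs from its value at $s=0$ by $O(t)$. The exponent therefore equals $\sigma_0t/\sqrt\e+O(t^2/\sqrt\e)$, and the error is $O(\sqrt\e L_\e^2)=o(1)$. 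This gives $|E_\e(t)|\simeq e^{\sigma_0t/\sqrt\e}$.

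\textbf{Bounds on $\mathbf Z_\e$.} It remains to bound the amplitudes $\boldsymbol{\mathcal U}_\e,\boldsymbol{\mathcal B}_\e$ in $L^2_\alpha$, above and below.
\begin{itemize}
\item For $\boldsymbol{\mathcal U}_\e$: the regular part $H(z-f)\pz\mathbf U^s$ is $O(1)$ in $L^2_\alpha$ and is bounded below, since $\pz\mathbf U^s\not\equiv0$ for $z>f$. The shear-layer terms are of size $\sqrt\e\cdot\e^{-1/2}\cdot\e^{1/8}$ and $\sqrt\e\cdot\e^{-1/4}\cdot\e^{1/8}$ in $L^2$, using the exponential decay of $\tilde W$. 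This yields the lower bound $c_0$ and an $O(1)$ upper bound.
\item For $\boldsymbol{\mathcal B}_\e$, the key pointwise facts are $|\rho_\e|\lesssim1$ and $|A_\e|\gtrsim\sqrt\e+|z-f|^2$. Moreover $\mathbf B^s(t,f(t))$ and $\pz\mathbf B^s(t,f(t))$ are $O(t)$, because $\mathbf B_s=\pz\mathbf B_s=0$ at $z_0$ and $\mathbf B^s$ is $C^1$ in time. Hence $|c^s(t,z)|\lesssim|z-f|^2+t$ near the layer.
\item Thus $c^s/A_\e=O(1+t/\sqrt\e)=O(L_\e)$. Also $\pz\mathbf B^s=O(|z-f|+t)$. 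Outside the layer all these factors are $O(1)$.
\end{itemize}
Together these give $\|\mathbf Z_\e(t)\|\le C_0L_\e^Ne^{\sigma_0t/\sqrt\e}$. The logarithmic losses come only from $t/\sqrt\e\lesssim L_\e$.

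\textbf{Residual $\mathbf R_\e$.} We split it into three parts.
\begin{itemize}
\item The Prandtl part $E_\e\mathbf P_\e$ is controlled as in \cite[Section 3.2]{LWY}; there the residual is $O(\e^{-1/4}L_\e^N)$ after accounting for the moving critical point $f(t)$.
\item The new momentum term is $\e^{-1}(c^s)^2A_\e^{-1}\boldsymbol{\mathcal D}_\e$. Here $\boldsymbol{\mathcal D}_\e=\boldsymbol{\mathcal U}_\e-\rho_\e\pz\mathbf U^s$ vanishes at leading order in the regular region, since $\rho_\e=H$ there. In the layer, $\boldsymbol{\mathcal D}_\e=O(1)$ and is localized to width $\e^{1/4}$ (plus tails bounded by $|z-f|$). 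With $|c^s|^2/|A_\e|\lesssim(|z-f|^4+t^2)/(\sqrt\e+|z-f|^2)$, which is $O(\sqrt\e L_\e^2)$ on $|z-f|\lesssim\e^{1/4}$, we get pointwise $O(\e^{-1/2}L_\e^2)$ on a set of width $\e^{1/4}$, hence $O(\e^{-1/2}\e^{1/8}L_\e^2)=O(\e^{-3/8}L_\e^2)$ in $L^2$.
\item Away from the layer, $\boldsymbol{\mathcal D}_\e$ must decay; we intend to use the exact cancellation $\boldsymbol{\mathcal U}^{reg}=H\pz\mathbf U^s$, together with the Taylor remainder terms of $\pz^2W^{sl}_\e$, to show $|\boldsymbol{\mathcal D}_\e|\lesssim\sqrt\e|z-f|^{-2}$ in the outer zone. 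This is the \textbf{main obstacle}, and we will check it first.
\item The induction residual \eqref{R34} is $(\pt-\pz^2)\boldsymbol{\mathcal B}_\e$. Each derivative falling on $\rho_\e$ or $A_\e^{-1}$ costs $\e^{-1/4}$ in the layer. However, $c^s$ and $\pz\mathbf B^s$ vanish to the appropriate order, and $\pt$ acting through $f(t)$ costs $\e^{-1/4}$ only. Counting $\pz^2$ of $(c^s/A_\e)\boldsymbol{\mathcal D}_\e$ gives $L_\e\cdot\e^{-1/2}$ pointwise on width $\e^{1/4}$, i.e.\ again $O(\e^{-3/8}L_\e^N)$.
\end{itemize}
Summing the three parts yields \eqref{residual-improved-mode}, and setting $t=0$ in the upper bound of \eqref{growth-improved-mode} gives $\|\mathbf Z_\e(0)\|\le C_0L_\e^N$.
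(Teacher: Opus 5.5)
\textbf{Gap: the behaviour of $\boldsymbol{\mathcal D}_\e$ away from the layer.} The gap in your treatment of the new momentum term $\e^{-1}(c^s)^2A_\e^{-1}\boldsymbol{\mathcal D}_\e$ sits exactly at the point you flag as the main obstacle. Moreover, the bound you propose to prove there would not close the argument. On the cutoff support with $\e^{1/4}\ll|z-f(t)|\lesssim1$ one has $c^s\approx\frac12\partial_z^2c^s(0,z_0)(z-f)^2$ and $A_\e\approx\frac12\partial_z^2w_a^s(0,z_0)(z-f)^2$. Nothing in Theorem~\ref{TH1} forces $\partial_z^2c^s(0,z_0)=0$, so $(c^s)^2/|A_\e|$ is of size $|z-f|^2$ there. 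With your target $|\boldsymbol{\mathcal D}_\e|\lesssim\sqrt\e\,|z-f|^{-2}$, the term is then of size $\e^{-1}\cdot|z-f|^2\cdot\sqrt\e|z-f|^{-2}=\e^{-1/2}$ on a set of width $O(1)$. That is $O(\e^{-1/2})$ in $L^2_\alpha$ rather than $O(\e^{-3/8})$, and after Duhamel it leaves a prefactor $\e^{-\mu}$ that does not tend to zero for any $\mu\geq0$. The ``tails bounded by $|z-f|$'' you allow inside the layer would be even worse.

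\textbf{The missing observation.} $\boldsymbol{\mathcal D}_\e$ has no regular part at all. By the choice of $\lambda_\e$, $W^{\rm reg}_\e=H(z-f(t))A_\e$ exactly, so $\rho_\e=H(z-f(t))+W^{\rm sl}_\e/A_\e$. Since also $(U^{\rm reg}_\e,V^{\rm reg}_\e)=H(z-f(t))\partial_z\mathbf U^s$, one gets
\[
\begin{aligned}
\boldsymbol{\mathcal D}_\e={}&\frac{\partial_z^2W^{\rm sl}_\e}{\partial_z^2w_a^s(t,f(t))}\,\partial_z\mathbf U^s(t,f(t))+\frac{\partial_zW^{\rm sl}_\e}{\partial_z^2w_a^s(t,f(t))}\,\partial_z^2\mathbf U^s(t,f(t))\\
&-\frac{W^{\rm sl}_\e}{A_\e}\,\partial_z\mathbf U^s(t,z).
\end{aligned}
\]
Every term contains $W_{sl}$ or one of its derivatives, which decay exponentially in $Z$ by the G\'erard-Varet--Dormy construction. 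In addition, $|A_\e|\gtrsim d^2(1+Z^2)$ on the cutoff support, and $\boldsymbol{\mathcal D}_\e\equiv0$ off it. Hence $|\partial_z^j\boldsymbol{\mathcal D}_\e|\lesssim d^{-j}e^{-c|Z|}$ for every $z$. With $|c^s|\lesssim d^2L_\e(1+Z^2)$, the momentum term is then bounded pointwise by $d^{-2}L_\e^2(1+Z^2)e^{-c|Z|}$, whose $L^2$ norm is $O(d^{-3/2}L_\e^2)=O(\e^{-3/8}L_\e^2)$. There is no outer zone to treat; this is the paper's route.

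\textbf{Secondary points.} The same localisation is what justifies your bounds for $\boldsymbol{\mathcal B}_\e$. Your claim that all factors are $O(1)$ outside the layer is false for $c^s/A_\e$ wherever $w_a^s(t,z)$ returns to the value $w_a^s(t,f(t))$, but there $\boldsymbol{\mathcal D}_\e=0$, so no harm is done. For the induction residual you must also check that $\boldsymbol{\mathcal B}_\e$ and $\partial_z\boldsymbol{\mathcal B}_\e$ are continuous across $z=f(t)$. The function $\partial_z^2W^{\rm sl}_\e$ jumps there, and only the jump relations $[W]=-\tau$, $[W']=0$, $[W'']=-W_s''(z_0)$, together with the inner ODE, give $[\boldsymbol{\mathcal B}_\e]=[\partial_z\boldsymbol{\mathcal B}_\e]=0$. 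This is what rules out Dirac masses in $\partial_z^2\boldsymbol{\mathcal B}_\e$. Finally, $\partial_tc^s\,\boldsymbol{\mathcal D}_\e/A_\e$ is already of order $d^{-2}$ in the layer, since $\partial_tc^s=\partial_z^2c^s$ need not vanish. This is harmless for the final count, but $\partial_t$ does not cost only $\e^{-1/4}$.
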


\begin{proof}

Put $d=\e^{1/4}$ and $Z=\frac{z-f(t)}{d}$. All estimates below are uniform for
$0\leq t\leq K_0d^2L_\e$; the power $N$ may increase from line to line.
Set
\[
\sigma_*(t):=-\left|\frac{\partial_z^2w_a^s(t,f(t))}{2}\right|^{1/2}\Im\widetilde\tau>0.
\]
Since the real-valued term $-w_a^s(t,f(t))$ does not change the modulus of
$E_\e$, formula~\eqref{jinsizhanshiuv1} gives
\begin{align}\label{modulus-E}
 |E_\e(t)|
 &=\exp\left(-\e^{-1}\int_0^t\Im\lambda_\e(s)\,ds\right)
 =\exp\left(d^{-2}\int_0^t\sigma_*(s)\,ds\right).
\end{align}
Let $\sigma_0=\sigma_*(0)$.  The regularity of the shear and the implicit-function construction of $f$ give $|\sigma_*(s)-\sigma_0|\leq Cs$.  Hence
\[
 \left|d^{-2}\int_0^t(\sigma_*(s)-\sigma_0)\,ds\right|
 \leq Cd^{-2}t^2\leq CK_0^2d^2L_\e^2=o(1).
\]
Consequently, after decreasing $\e_0$ if necessary,
\begin{align}\label{E-two-sided}
 C^{-1}e^{\frac{\sigma_0t}{d^2}}\leq |E_\e(t)|
 \leq Ce^{\frac{\sigma_0t}{d^2}}.
\end{align}
The non-degenerate critical point and $\Im\widetilde\tau<0$ give, on a
sufficiently small fixed support of the cutoff,
\begin{align}\label{A-lower-bound}
 cd^2(1+Z^2)\leq|A_\e(t,z)|\leq Cd^2(1+Z^2).
\end{align}
%Then, together with $\rho_\e-H(z-f(t))=\frac{W_\e^{\rm sl}}{A_\e}$, we have
%\begin{align}\label{defect-bounds}
% |\partial_z^j(\rho_\e-H(z-f(t)))|
% \leq Cd^{-j}L_\e^Ne^{-c|Z|},\qquad 0\leq j\leq2.
%\end{align}
%Indeed, the imaginary part controls $|Z|\lesssim1$, while the quadratic Taylor term controls large $|Z|$ in this neighborhood.
Let $\boldsymbol{\mathcal D}_\e:=\boldsymbol{\mathcal U}_\e-\rho_\e\partial_z\mathbf U^s$. Using \eqref{jinsizhanshiuv1}, we readily obtain
\begin{align}\label{defect-expansion}
\begin{aligned}
 \boldsymbol{\mathcal D}_\e
 ={}&\frac{\partial_z^2W_\e^{\rm sl}}
 {\partial_z^2w_a^s(t,f(t))}\partial_z\mathbf U^s(t,f(t))
 +\frac{\partial_zW_\e^{\rm sl}}
 {\partial_z^2w_a^s(t,f(t))}\partial_z^2\mathbf U^s(t,f(t))\\
 &-\big(\rho_\e-H(f(t))\big)\partial_z\mathbf U^s(t,z).
\end{aligned}
\end{align}
Since $W_\e^{\mathrm{sl}}$ is $d^2$ times a piecewise smooth, exponentially decaying function of $Z$,
together with $\rho_\e-H(z-f(t))=\frac{W_\e^{\rm sl}}{A_\e}$, \eqref{A-lower-bound}
and \eqref{defect-expansion} yields
\begin{align}\label{defect-bounds}
 |\partial_z^j(\rho_\e-H(z-f(t)))|+|\partial_z^j\boldsymbol{\mathcal D}_\e|
 \leq C d^{-j}L_\e^N e^{-c|Z|},\qquad 0\leq j\leq2.
\end{align}
It follows from \cite[(3.28)]{LWY} and \cite[(3.35)]{LWY} that
\begin{align}\label{prandtl-profile-estimates}
 c\leq\|\boldsymbol{\mathcal U}_\e(t)\|_{L_\alpha^2}\leq CL_\e^N,
 \qquad \|E_\e\mathbf P_\e(t)\|_{L_\alpha^2}\leq Cd^{-1}e^{\sigma_0t/d^2}\leq Cd^{-1}L_\e^N e^{\sigma_0t/d^2},
\end{align}
where $E_\e\mathbf P_\e$ denotes the two Prandtl momentum residuals.

By \eqref{buxiangdeng}, the heat equations, and $f(t)-z_0=O(t)$, we obtain
\[
 \mathbf B^s(t,f(t))=O(t),\qquad
 \partial_z\mathbf B^s(t,f(t))=O(t),\qquad
 \partial_z^2\mathbf B^s(t,f(t))=O(1).
\]
Taylor expansion in $z$ therefore gives
\begin{align}\label{magnetic-parabolic-taylor}
 |\partial_z^j\mathbf B^s|+|\partial_z^jc^s|
 \leq Cd^{2-j}L_\e(1+ Z^2),\qquad j=0,1,2,
\end{align}
on the cutoff support. Also $|\partial_tc^s|+
|\partial_t\partial_z\mathbf B^s|\leq C$.
Using \eqref{Bedy}, the quotient rule and \eqref{A-lower-bound}-\eqref{magnetic-parabolic-taylor}, for $0\leq j\leq2$, we obtain
\begin{align}\label{magnetic-layer-estimates}
 \left|\partial_z^j\left(\boldsymbol{\mathcal B}_\e-
 H(z-f(t))\partial_z\mathbf B^s\right)\right|
 =\left|\partial_z^j\left(\big(\rho_\e-
 H(z-f(t))\big)\partial_z\mathbf B^s+\frac{c^s}{A_\e}\boldsymbol{\mathcal D}_\e\right)\right|
 \leq Cd^{-j}L_\e^Ne^{-c|Z|}.
\end{align}
It then follows from \eqref{magnetic-parabolic-taylor} and \eqref{magnetic-layer-estimates} that
\begin{align}\label{Be}
 \|\boldsymbol{\mathcal B}_\e\|_{L_\alpha^2}\leq
  \|\boldsymbol{\mathcal B}_\e-
 H(z-f(t))\partial_z\mathbf B^s \|_{L_\alpha^2}+\|\partial_z\mathbf B^s\|_{L_\alpha^2}
 \leq CL_\e^N.
\end{align}

We now prove the two assertions in \eqref{growth-improved-mode}.  By
\eqref{zfdy}, \eqref{bghdy}, \eqref{E-two-sided},
\eqref{prandtl-profile-estimates}  and \eqref{Be},
\[
 \begin{split}
 \|\mathbf Z_\e(t)\|_{L_\alpha^2}
 =|E_\e(t)|\,
 \|(\boldsymbol{\mathcal U}_\e,
 \boldsymbol{\mathcal B}_\e)(t)\|_{L_\alpha^2}
 \leq CL_\e^N e^{\frac{\sigma_0t}{d^2}},
 \end{split}
\]
where \eqref{E-two-sided} was used in the last line.  The velocity component
alone and the lower bound in \eqref{prandtl-profile-estimates} give
$$
 \|\mathbf Z_\e(t)\|_{L_\alpha^2}
 \geq c|E_\e(t)|\geq c e^{\sigma_0t/d^2}.
$$

Next, we will  prove \eqref{residual-improved-mode}.
Recall \eqref{R12},
\[
 (R_{1\e},R_{2\e})=E_\e\left(\mathbf P_\e+
 d^{-4}\frac{(c^s)^2}{A_\e}\boldsymbol{\mathcal D}_\e\right).
\]
Using  \eqref{A-lower-bound}, \eqref{defect-bounds} and \eqref{magnetic-parabolic-taylor}, we obtain
\begin{align*}
\left\|d^{-4}\frac{(c^s)^2}{A_\e}
\boldsymbol{\mathcal D}_\e\right\|_{L_\alpha^2}
\leq Cd^{-2}
\left\|L_\e^N(1+ Z^2) e^{-c|Z|}\right\|_{L_z^2}
\leq Cd^{-3/2}L_\e^N.
\end{align*}
Hence, \eqref{E-two-sided} and \eqref{prandtl-profile-estimates} imply
\begin{align}\label{momentum-residual-bound}
 \|(R_{1\e},R_{2\e})(t)\|_{L_\alpha^2}
 \leq Cd^{-3/2}L_\e^Ne^{\frac{\sigma_0 t}{d^2}}.
\end{align}

Recall that
\[
(R_{3\varepsilon},R_{4\varepsilon})
=
iE_\varepsilon(\partial_t-\partial_z^2)
\boldsymbol{\mathcal B}_\varepsilon,
\qquad
\boldsymbol{\mathcal B}_\varepsilon
=
\rho_\varepsilon\partial_z\mathbf B^s
+
\frac{c^s}{A_\varepsilon}
\boldsymbol{\mathcal D}_\varepsilon.
\]
We apply the parabolic operator directly to the complete magnetic
amplitude. First, the jump relations of the shear-layer profile imply
\begin{align}\label{bjump}
[\boldsymbol{\mathcal B}_\varepsilon]_{z=f(t)}=0,
\qquad
[\partial_z\boldsymbol{\mathcal B}_\varepsilon]_{z=f(t)}=0.
\end{align}
Consequently,
$(\partial_t-\partial_z^2)\boldsymbol{\mathcal B}_\varepsilon$
contains neither a Dirac mass nor a derivative of a Dirac mass at
$z=f(t)$.\\
It remains to estimate the regular part.  Since
$\rho_\e-H(z-f(t))=W_\e^{\rm sl}/A_\e$, the quotient rule and
\eqref{A-lower-bound}-\eqref{defect-bounds} and \eqref{Be} give, for
$0\leq j\leq2$,
\begin{align}\label{magnetic-correction-spatial-estimates}
 &\left|\partial_z^j\big(\rho_\e-H(z-f(t))\big)\right|
  +|\partial_z^j\boldsymbol{\mathcal D}_\e|
  \leq Cd^{-j}L_\e^Ne^{-c|Z|},\notag\\
 &\left|\partial_z^j\left[
  \big(\rho_\e-H(z-f(t))\big)\partial_z\mathbf B^s
  +\frac{c^s}{A_\e}\boldsymbol{\mathcal D}_\e
  \right]\right|
  \leq Cd^{-j}L_\e^Ne^{-c|Z|}.
\end{align}

The definition of $A_\e$ in \eqref{AQDY} and $\partial_z w_a^s(t,f(t))=0$ imply $|\partial_tA_\e|\leq C(d|Z|+d^2).$
A direct computation yields
\[
\pt\Big(\rho_\e-H(z-f(t))\Big)=\frac{\pt W_\e^{\rm sl}}{A_\e}-\frac{W_\e^{\rm sl}\pt A_\e}{A^2_\e},
\]
and
\begin{align*}
\partial_t\left(
\frac{c^s}{A_\varepsilon}\boldsymbol{\mathcal D}_\varepsilon
\right)
=
\frac{\partial_tc^s}{A_\varepsilon}\boldsymbol{\mathcal D}_\varepsilon
&-\frac{c^s\partial_tA_\varepsilon}{A_\varepsilon^2}
\boldsymbol{\mathcal D}_\varepsilon
+\frac{c^s}{A_\varepsilon}
\partial_t\boldsymbol{\mathcal D}_\varepsilon.
\end{align*}
Together with \eqref{A-lower-bound}-\eqref{defect-bounds}
and $\partial_tZ=-f'(t)/d$ yields
\begin{align}\label{magnetic-correction-time-estimate}
 \left|\partial_t\left[
 \big(\rho_\e-H(z-f(t))\big)\partial_z\mathbf B^s
 +\frac{c^s}{A_\e}\boldsymbol{\mathcal D}_\e
 \right]\right|
 \leq Cd^{-2}L_\e^Ne^{-c|Z|}.
\end{align}
For $z\ne f(t)$, the Heaviside function $H(z-f(t))$ is constant.  Since the background magnetic shear satisfies
$(\partial_t-\partial_z^2)\partial_z\mathbf B^s=0$, \eqref{magnetic-correction-spatial-estimates}-\eqref{magnetic-correction-time-estimate} yield
\begin{align}\label{magnetic-parabolic-pointwise}
 \left| (\partial_t-\partial_z^2)
 \boldsymbol{\mathcal B}_\e\right|
 =\left| (\partial_t-\partial_z^2)\left[
 \big(\rho_\e-H(z-f(t))\big)\partial_z\mathbf B^s
 +\frac{c^s}{A_\e}\boldsymbol{\mathcal D}_\e
 \right]\right|
 \leq Cd^{-2}L_\e^Ne^{-c|Z|}.
\end{align}
In view of \eqref{bjump}, we get
%the piecewise regular function in \eqref{magnetic-parabolic-pointwise} is the full distributional parabolic derivative of $\boldsymbol{\mathcal B}_\e$; no singular measure remains.On the fixed cutoff support the weight $e^{\alpha z}$ is uniformly bounded, and $dz=d\,dZ$; therefore
\begin{align}
 \begin{aligned}
 \left\|(\partial_t-\partial_z^2)
 \boldsymbol{\mathcal B}_\e(t)\right\|_{L_\alpha^2}^2
 &\leq C d^{-4}L_\e^{2N}
 \int e^{-2c|Z|}\,dz\\
 &=C d^{-3}L_\e^{2N}
 \int e^{-2c|Z|}\,dZ.
 \end{aligned}
\end{align}
Taking the square root and then using \eqref{R34} and
\eqref{E-two-sided}, we finally obtain
\begin{align}
 \|(R_{3\e},R_{4\e})(t)\|_{L_\alpha^2}
 &=|E_\e(t)|\,
 \left\|(\partial_t-\partial_z^2)
 \boldsymbol{\mathcal B}_\e(t)\right\|_{L_\alpha^2}\notag\\
 &\leq Cd^{-3/2}L_\e^N
 e^{\frac{\sigma_0t}{d^2}}.\label{induction-residual-bound}
\end{align}

Finally, \eqref{momentum-residual-bound} and
\eqref{induction-residual-bound}, with $d^{-3/2}=\e^{-3/8}$, prove
\eqref{residual-improved-mode}.
\end{proof}
%Therefore,
%This means that, when considering the linear equation \eqref{mhd2}, the constructed approximate solutions $(u_\e,v_\e,w_\e,b_\e,g_\e,h_\e)(t,x,y,z)$ satisfy the linearized equation \eqref{jsmhd2}, with the error terms $R_{1\e},R_{2\e},R_{3\e},R_{4\e}$  bounded by $C\e^{-\frac{1}{4}}e^{\frac{\sigma_0t}{\sqrt{\e}}}$.

\subsection{Proof of Theorem ~\ref{TH1}~}
%We will use the contradiction method from \cite{JAMS} to prove Theorem \ref{TH1}. We treat $\partial_{\mathfrak h}=\partial_y$; the $x$ case is identical after replacing the modal frequency $\e^{-1}$ by $|a|\e^{-1}$.
We first prove the assertion for \(\partial_h=\partial_y\). When \(a\neq0\), the proof for \(\partial_h=\partial_x\) is identical, with the modal frequency \(\varepsilon^{-1}\) replaced by \(|a|\varepsilon^{-1}\).
Let
\[
 \gamma_{\mathfrak h}:=
 \begin{cases}1,&\partial_{\mathfrak h}=\partial_y,\\ |a|,&\partial_{\mathfrak h}=\partial_x.
 \end{cases}
\]
Fix $0<\sigma<\sigma_0/\sqrt{\gamma_{\mathfrak h}}$ and suppose, contrary to
\eqref{bsdjl}, that for some $m\geq0$, $\mu<\frac18$ and $\delta>0$,
\begin{align}\label{bounded-semigroup-improved}
 \sup_{0\leq s<t\leq\delta}
 \big\|e^{-\sigma(t-s)\sqrt{|\partial_{\mathfrak h}|}}\mathcal T(t,s)\big\|_{
 \mathcal L(H^m,H^{m-\mu})}<\infty.
\end{align}
%Conjugating the full four-component linearized operator by the periodic mode $e^{i\e^{-1}(y-ax)}$, with $\e$ restricted by \eqref{epsilon-sequence}, defines a modal evolution operator $\mathcal T_\e(t,s)$ .  Estimate \eqref{bounded-semigroup-improved} implies
From \eqref{bounded-semigroup-improved}, it is easy to see that
\begin{align}\label{modal-semigroup-improved}
 \|\mathcal T_\e(t,s)\|_{\mathcal L(L_\alpha^2)}
 \leq C\e^{-\mu}
 \exp\!\left(\frac{\sigma\sqrt{\gamma_{\mathfrak h}}(t-s)}{\sqrt\e}\right).
\end{align}

Let $\mathbf Z_\e=(U_\e,V_\e,B_\e,G_\e)$ be the approximate tangential profile of
Proposition~\ref{prop-improved-residual}, and let $\mathbf Z$ be the exact modal
solution with initial value $\mathbf Z_\e(0)$.  Duhamel's formula, with the harmless
sign of the residual absorbed into $\mathbf R_\e$, gives
\[
 \mathbf Z(t)-\mathbf Z_\e(t)
 =-\int_0^t\mathcal T_\e(t,s)\mathbf R_\e(s)\,ds.
\]
Combining \eqref{modal-semigroup-improved} with
\eqref{residual-improved-mode} and using
$\sigma\sqrt{\gamma_{\mathfrak h}}<\sigma_0$, we obtain
\begin{align}\label{duhamel-improved}
 \|\mathbf Z(t)-\mathbf Z_\e(t)\|_{L_\alpha^2}
 &\leq C\e^{-\mu-3/8}L_\e^N
 \int_0^t e^{\sigma\sqrt{\gamma_{\mathfrak h}}(t-s)/\sqrt\e}
 e^{\sigma_0s/\sqrt\e}\,ds\\
 &\leq C\e^{1/8-\mu}L_\e^N e^{\sigma_0t/\sqrt\e}.
\end{align}
Since $\mu<\frac18$, the prefactor in the last line tends to zero.

Choose
\begin{align}\label{logarithmic-time}
 t_\e=K\sqrt\e\,|\log\e|,
 \qquad
 K\big(\sigma_0-\sigma\sqrt{\gamma_{\mathfrak h}}\big)>\mu+1.
\end{align}
Apply Proposition~\ref{prop-improved-residual} with a fixed $K_0>K$.  For small
$\e$, $t_\e<\min\{\delta,K_0\sqrt\e L_\e\}$.
Equations \eqref{growth-improved-mode} and \eqref{duhamel-improved} then yield
\begin{align}\label{exact-lower-improved}
 \|\mathbf Z(t_\e)\|_{L_\alpha^2}
 \geq \frac{c_0}{2}e^{\sigma_0t_\e/\sqrt\e}.
\end{align}
On the other hand, \eqref{modal-semigroup-improved} and the initial bound in
Proposition~\ref{prop-improved-residual} give
\begin{align}\label{exact-upper-improved}
 \|\mathbf Z(t_\e)\|_{L_\alpha^2}
 \leq C\e^{-\mu}L_\e^N
 e^{\sigma\sqrt{\gamma_{\mathfrak h}}t_\e/\sqrt\e}.
\end{align}
The choice \eqref{logarithmic-time} makes \eqref{exact-upper-improved} incompatible
with \eqref{exact-lower-improved} as $\e\to0$.  This proves \eqref{bsdjl}.

\subsection*{Acknowledgements}
%Zhonger Wu is supported by the China Postdoctoral Science Foundation under grant 2024M753434.
Zhonger Wu is supported by National Natural Science Foundation of China Grant 12601431 and STU Scientific Research Initiation Grant NTF25028T.

\end{document}